\newtheorem{theorem}{Theorem}  
\newtheorem{lemma}[theorem]{Lemma}
\theoremstyle{definition} 
\newtheorem{remark}[theorem]{Remark}
  \newcounter{case}[theorem]
  \newcounter{subcase}[case]
  \newcounter{subsubcase}[subcase]
  \newcounter{subsubsubcase}[subsubcase]
  \newcounter{subsubsubsubcase}[subsubsubcase]
 \def\l{{\lambda}}
 \renewcommand{\-}{\ensuremath{\text{--}}}
\let\oldmarginpar\marginpar
\renewcommand\marginpar[1]{\-\oldmarginpar[\raggedleft\footnotesize #1]%
{\raggedright\footnotesize #1}}
\title{$m$-Cycle Packings of $(\lambda+\mu)K_{v+u}-\lambda K_v$: $m$ even}
\author{John Asplund \\
\texttt{Department of Technology \& Mathematics}\\
\texttt{Dalton State College, GA 30720, USA}\\
\texttt{jasplund@daltonstate.edu}\\}
\begin{document}

\maketitle

\begin{center}
{\bf \footnotesize Abstract}
\end{center}

{\footnotesize 
A $\lambda K_v$ is a complete graph on $v$ vertices with $\lambda$ edges between each pair of the $v$ vertices. A $(\lambda+\mu)K_{v+u}-\lambda K_v$ is a $(\lambda+\mu)K_{v+u}$ with the edge set of $\lambda K_v$ removed. Decomposing a $(\lambda+\mu)K_{v+u}-\lambda K_v$ into edge-disjoint $m$-cycles has been studied by many people. To date, there is a complete solution for $m=4$ and partial results when $m=3$ or $m=5$. In this paper, we are able to solve this problem for all even cycle lengths as long as $u,v\geq m+2$. 
}

\vspace{.3in}

\section{Introduction} \label{intro}

All multigraphs in this paper are loopless. A graph is called \textit{even} if the degree of each vertex is even. 
A $\l K_v$ is a complete graph on $v$ vertices where there are $\l$ edges between each pair of vertices in the graph. If $G_1$ and $G_2$ are graphs such that $G_2$ is a subgraph of $G_1$, then $G_1-G_2$ is the graph $G_1$ with the edge set of $G_2$ removed, e.g. a $(\l+\mu)K_{v+u}-\l K_v$ is a $(\l+\mu)K_{v+u}$ with the edge set of a $\l K_v$ removed.
A \textit{cycle} of length $m$ is denoted as an $m$-cycle. A \textit{decomposition} of a graph $G$ is a partition of the edge set of $G$. A decomposition of a graph $G$ such that each element of the partition is an $m$-cycle is called an \textit{$m$-cycle decomposition} of $G$. Often times we think of the decomposition of a graph as the subgraphs of the elements of the partition. From this context it should be clear which of these definitions we are using. If we want to place emphasis on the set of cycles in the decomposition we will say that $(V,C)$ is an \textit{$m$-cycle system} of $G$ where $V=V(G)$ and $C$ is the set of cycles induced by the partition of $H$ of the edge set of $G$. One instance where we do place emphasis is the set of cycle system enclosings. An $m$-cycle system $(V,C)$ of a graph $G$ is defined where $V$ is the set of vertices in $G$ and $C$ is the set of edge-disjoint $m$-cycles that decomposes the edge set of $G$. 
Alspach {\normalfont\cite{Alspach}} famously conjectured in 1981 that a complete graph could be decomposed into edge-disjoint cycles of arbitrary length. After more than $30$ years, this conjecture was settled in {\normalfont\cite{BHP}}, and not long after the multigraph analogue of this conjecture was settled in {\normalfont\cite{BHMS2}}.  Though investigating the decomposition of a complete graph is a natural starting point, many other types of graphs have been investigated. In particular, we will focus on decomposing a $(\l+\mu)K_{v+u}-\l K_v$ into $m$-cycles.

An \textit{$m$-cycle system} of a graph $G$ is a pair $(V,C)$ where $V$ is the set of vertices in $G$ and $C$ is the set of edge-disjoint $m$-cycles that decomposes the edge set of $G$. An $m$-cycle system $(V,C)$ of $\l K_v$ is said to be \textit{enclosed} in an $m$-cycle system $(V\cup U, C')$ of $(\l+\mu)K_{v+u}$ if $C\subseteq C'$ and $u,\mu\geq 1$. 
There exists an enclosing of an $m$-cycle system of $\l K_v$ in an $m$-cycle system of $(\l+\mu)K_{v+u}$ if and only if there exists both an $m$-cycle decomposition of $(\l+\mu)K_{v+u}-\l K_v$ and an $m$-cycle decomposition of $\l K_v$. 


Only partial results have been shown for the enclosing problem when $m=3$ (see \cite{CHR,HMS,HS,NR2}). In \cite{NR}, the enclosing problem was completely solved in the case when $m=4$. 
Again, only partial results have been shown for the enclosing problem when $m=5$. The necessary conditions for $m=5$ were shown in \cite{AKR} and proved to be sufficient in the cases when $\mu=0$ (technically these are called embeddings), $u=1$, or $u=2$ in \cite{A,AKR,AKR2}.

We aim to show that there exists an $m$-cycle decomposition of $(\l+\mu)K_{v+u}-\l K_v$ when $m$ is even. To do this, we first need to discuss packings and paths. 
A \textit{path of length $k$} is called a $k$-path and is denoted $[a_0,a_1,\ldots,a_{k}]$.
An \textit{$m$-cycle packing} of $G$ is a decomposition of a subgraph $H$ of $G$ into edge-disjoint $m$-cycles. The set of edges in $G$ that are not a part of $H$, that is $E(G)\setminus E(H)$, is called the \textit{leave} of the packing.  If the leave of an $m$-cycle packing $\mathcal{P}$ of $G$ is empty, then $\mathcal{P}$ is an $m$-cycle decomposition. 

In Section~{\normalfont\ref{prelim}}, we give some necessary conditions for the existence of an $m$-cycle decomposition of $(\l+\mu)K_{v+u}-\l K_v$ as well as results that are useful in proving the main theorem (see Theorem~{\normalfont\ref{mainThm}} in Section~{\normalfont\ref{main}}).

\section{Preliminary Results}\label{prelim}

The following necessary conditions for an $m$-cycle decomposition of $(\l+\mu) K_{v+u}-\l K_v$ are similar to those found in {\normalfont\cite{AKR}} except that the conditions presented in this paper are generalized for $m$-cycle decompositions rather than $5$-cycle decompositions.

\begin{theorem}\label{necessaryConditions1}
Let $\l$, $\mu$, $v$, and $u$ be strictly positive integers and let $m>2$ be an integer. If there exists an $m$-cycle decomposition of $(\l+\mu)K_{v+u}-\l K_v$ $(V\cup U,E)$, then
\begin{enumerate}
\item[$(a)$] $u(\l+\mu)+\mu(v-1)\equiv 0\pmod{2}$;
\item[$(b)$] $v(\l+\mu)+(\l+\mu)(u-1)\equiv 0\pmod{2}$;
\item[$(c)$] $(\l+\mu)\binom{u}{2}+vu(\l+\mu)+\mu\binom{v}{2}\equiv 0\pmod{m}$; and
\item[$(d)$] if $u<m$ then 
$$\left\lfloor \frac{(\l+\mu)\binom{u}{2}}{u-1}\right\rfloor (m-u+1)+\varepsilon_1\left(m-\frac{u-1}{2}\right)\leq \mu\binom{v}{2}+vu(\l+\mu)$$ 
where $\varepsilon_1=0$ if $u(\l+\mu)\equiv 0\pmod{2}$ or $\varepsilon_1=1$ if $u(\l+\mu)\equiv 1\pmod{2}$.
\item[$(e)$] if $v<m$ then
$$\left\lfloor \frac{\mu\binom{v}{2}}{v-1}\right\rfloor (m-v+1)+\varepsilon_2\left(m-\frac{v-1}{2}\right)\leq (\l+\mu)\binom{u}{2}+vu(\l+\mu)$$ 
where $\varepsilon_2=0$ if $\mu v\equiv 0\pmod{2}$ or $\varepsilon_2=1$ if $\mu v\equiv 1\pmod{2}$.
\end{enumerate}
\end{theorem}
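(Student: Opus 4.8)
The plan is to read off all five conditions from elementary counting in a hypothetical $m$-cycle decomposition of $G:=(\lambda+\mu)K_{v+u}-\lambda K_v$. First I would record the edge multiplicities with respect to the partition $V\cup U$: each pair inside $U$ carries $\lambda+\mu$ edges, each pair inside $V$ carries $\mu$ edges (the $\lambda K_v$ having been deleted), and each $U$--$V$ pair carries $\lambda+\mu$ edges. Consequently a vertex of $U$ has degree $(\lambda+\mu)(u+v-1)$, a vertex of $V$ has degree $\mu(v-1)+(\lambda+\mu)u$, and $|E(G)|=(\lambda+\mu)\binom{u}{2}+\mu\binom{v}{2}+(\lambda+\mu)uv$. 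Since every $m$-cycle contributes exactly $2$ to the degree of each vertex it meets, the degree of every vertex is even; requiring the two degree expressions to be even yields $(a)$ (from $V$) and $(b)$ (from $U$). Since the cycles partition $E(G)$ and each uses exactly $m$ edges, $m$ divides $|E(G)|$, which is $(c)$.

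The substantive conditions $(d)$ and $(e)$ are symmetric under interchanging $U$ and $V$ together with their multiplicities, so I would prove $(d)$ and deduce $(e)$ verbatim. Assume $u<m$. As an $m$-cycle is simple on $m$ distinct vertices and $u<m$, no cycle lies wholly inside $U$; within a single cycle the $U$-vertices occur in maximal runs, and the number of $U$--$U$ edges on one cycle is therefore at most $u-1$, with equality only when all $u$ vertices of $U$ appear consecutively. Let $N_U$ be the number of cycles meeting at least one $U$--$U$ edge. Counting $U$--$U$ edges over these cycles gives $(\lambda+\mu)\binom{u}{2}\leq (u-1)N_U$, so $N_U\geq \lceil (\lambda+\mu)\binom{u}{2}/(u-1)\rceil$. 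Moreover a cycle carrying $j\geq 1$ of the $U$--$U$ edges carries exactly $m-j$ non-$U$--$U$ edges, so the non-$U$--$U$ edges lying on these $N_U$ cycles number $mN_U-(\lambda+\mu)\binom{u}{2}$; as these form a subset of all non-$U$--$U$ edges, I obtain $\mu\binom{v}{2}+(\lambda+\mu)uv\geq mN_U-(\lambda+\mu)\binom{u}{2}\geq m\lceil (\lambda+\mu)\binom{u}{2}/(u-1)\rceil-(\lambda+\mu)\binom{u}{2}$.

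It remains to evaluate the final bound. Writing $(\lambda+\mu)\binom{u}{2}/(u-1)=(\lambda+\mu)u/2$ and $q:=\lfloor(\lambda+\mu)\binom{u}{2}/(u-1)\rfloor$, the ceiling equals $q$ when $u(\lambda+\mu)$ is even, giving $mq-(\lambda+\mu)\binom{u}{2}=q(m-u+1)$; and it equals $q+1$ when $u(\lambda+\mu)$ is odd, in which case the leftover is exactly $(u-1)/2$ and the bound becomes $q(m-u+1)+\big(m-(u-1)/2\big)$. These two outcomes match the stated right-hand side $q(m-u+1)+\varepsilon_1\big(m-(u-1)/2\big)$, so $(d)$ follows, and $(e)$ by the same argument with the roles of $U$ and $V$ exchanged.

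I expect the main obstacle to be exactly this last bookkeeping step: reconciling the natural ceiling-minus-$(\lambda+\mu)\binom{u}{2}$ quantity with the stated floor-plus-$\varepsilon_1$ form, which hinges on the arithmetic fact that the remainder of $(\lambda+\mu)\binom{u}{2}$ modulo $u-1$ is either $0$ or precisely $(u-1)/2$ according to the parity of $u(\lambda+\mu)$, so that the single constant $\varepsilon_1$ captures the deficiency. A secondary care point is justifying the per-cycle cap of $u-1$ edges inside $U$ in the multigraph setting, namely that cycles are simple so that none can be absorbed entirely into $U$ once $u<m$.
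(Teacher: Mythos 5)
Your proof is correct and follows essentially the same route as the paper: parity of vertex degrees gives $(a)$ and $(b)$, divisibility of $|E(G)|$ by $m$ gives $(c)$, and the observation that a cycle meeting $U$ in $u<m$ vertices carries at most $u-1$ edges inside $U$ gives $(d)$ and, by symmetry, $(e)$. Your bookkeeping via $mN_U-(\lambda+\mu)\binom{u}{2}$ and the remainder of $(\lambda+\mu)\binom{u}{2}$ modulo $u-1$ is just a slightly more explicit rendering of the paper's per-cycle estimate, not a different argument.
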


\begin{proof}
Let $G=(\l+\mu)K_{v+u}-\l K_v$ with vertex set $V\cup U$ such that $\l K_v$ has vertex set $V$. Since the degree of each vertex must be even, Conditions $(a)$ and $(b)$ hold. Since $|E(G)|$ must be even, Condition $(c)$ must hold. 

To show Condition $(d)$ is necessary, let $u<m$. Since $u<m$, each $m$-cycle will contain at most $u-1$ of the $(\l+\mu)\binom{u}{2}$ edges between vertices in $U$. Thus each $m$-cycle must contain at least $(m-u+1)$ edges in $G-(\l+\mu)K_u$.  So we will need at least $\left\lfloor \frac{(\l+\mu)\binom{u}{2}}{u-1}\right\rfloor (m-u+1)$ edges in $G-(\l+\mu)K_u$ to decompose all of the edges in $(\l+\mu)K_u$. In addition, we will also need one $m$-cycle if $(\l+\mu)\binom{u}{2}/(u-1)$ is not an integer, that is, if $u(\l+\mu)\equiv 1\pmod{2}$. This possible extra $m$-cycle will contain at least $m-(u-1)/2$ edges in $G-(\l+\mu)K_u$. 
Hence $(d)$ follows since the right hand side of the inequality is the total number of edges in $G-(\l+\mu)K_u$.
The argument for Condition $(e)$ follows in the same manner as Condition $(d)$.
\end{proof}

%
%



The following result by Bryant \emph{et al.} is the solution to the multigraph analogue of a conjecture by Alspach. It is also one of several results needed for our main theorem. 
\begin{theorem}\label{multiLKn}
{\normalfont\cite{BHMS2}} There is a decomposition $\{G_1,G_2,\ldots,G_t\}$ of $\l K_n$ in which $G_i$ is an $m_i$-cycle for $i=1,2,\ldots,t$ if and only if
\begin{itemize}
\item $\l(n-1)$ is even;
\item $2\leq m_1,m_2,\ldots,m_t\leq n$
\item $m_1+m_2+\cdots+m_t=\l \binom{n}{2}$;
\item $\max(m_1,m_2,\ldots,m_t)+t-2\leq \frac{\l}{2}\binom{n}{2}$ when $\l$ is even; and
\item $\sum_{m_i=2}m_i\leq (\l-1)\binom{n}{2}$
\end{itemize}
\end{theorem}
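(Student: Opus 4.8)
The statement is an \emph{if and only if}, so the plan splits into establishing necessity of the five conditions and then the (much harder) sufficiency. I will treat necessity directly and reduce sufficiency to the already-settled simple-graph case.

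For necessity I would argue from the combinatorics of a hypothetical decomposition $\{G_1,\dots,G_t\}$ of $\l K_n$. Every cycle through a vertex contributes exactly $2$ to its degree, so each vertex degree is even; since every vertex of $\l K_n$ has degree $\l(n-1)$, this forces $\l(n-1)$ even. A cycle visits distinct vertices and uses at least two edges, which gives $2\le m_i\le n$, and counting edges gives $\sum m_i=\l\binom{n}{2}$. The remaining two conditions are the delicate extremal ones. The bound $\max(m_i)+t-2\le\frac{\l}{2}\binom{n}{2}$ (for $\l$ even) is the obstruction to packing one very long cycle together with many short ones; I would isolate the longest cycle and bound how the remaining $t-1$ cycles must fill the rest of the even multigraph, exploiting the extra parity available when $\l$ is even. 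The condition $\sum_{m_i=2}m_i\le(\l-1)\binom{n}{2}$ limits how many $2$-cycles (digons) may appear: after deleting all digons the residual multigraph must still decompose into cycles of length at least $3$, and a feasibility argument shows that roughly one copy of $K_n$ worth of edges must survive for this to be possible. These last two are where I would expect the necessity argument to require genuine care rather than routine counting.

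For sufficiency my plan is to reduce to the resolution of Alspach's conjecture for $K_n$ (and $K_n$ minus a perfect matching) from \cite{BHP}. The steps, in order, are: (1) peel off exactly the prescribed $2$-cycles, using the $2$-cycle bound to guarantee the digons can be chosen so the residual multigraph stays decomposable; (2) reduce the residual problem to cycles of length at least $3$ and set up an induction on $\l$, splitting $\l K_n$ into copies of $K_n$ and $2K_n$ according to the parity of $\l$; (3) in the base layers invoke the simple-graph result to realize most of the target length multiset, holding in reserve the few lengths that interact with the extremal bounds; and (4) merge and split cycles across layers using local exchange (switching) operations that preserve the edge partition while altering a pair of cycle lengths, so as to pin the realized length vector down to exactly $(m_1,\dots,m_t)$.

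The hard part is step (4) together with the extremal cases: controlling the \emph{exact} multiset of lengths (not merely its sum) while combining the layer decompositions, and doing so precisely when $\max(m_i)+t-2\le\frac{\l}{2}\binom{n}{2}$ or the $2$-cycle bound is tight. Near those boundaries there is essentially no slack, so the switching arguments must be executed with great care, and this is exactly where the depth of \cite{BHMS2} lies; I would expect most of the effort to go into establishing the exchange lemmas that let one trade cycle lengths without leaving the family of valid decompositions.
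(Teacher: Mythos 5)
This statement is not proved in the paper at all: Theorem~\ref{multiLKn} is quoted from \cite{BHMS2}, where establishing it takes an entire research article. So the only ``proof'' you can be compared against is a citation, and the real question is whether your proposal stands on its own as a proof. It does not. It is a plan, and at every point where the theorem is genuinely hard the plan defers rather than argues. On necessity, your treatment of the first three conditions (even degrees, $2\le m_i\le n$, the edge count) is correct but routine; for the two extremal conditions you offer only intentions (``I would isolate the longest cycle,'' ``a feasibility argument shows\ldots''), with no actual inequality derived. Carrying out that feasibility argument would in fact have been illuminating: the natural per-pair bound is that each pair of vertices carries at most $\lfloor\lambda/2\rfloor$ digons, giving $\sum_{m_i=2}m_i\le 2\lfloor\lambda/2\rfloor\binom{n}{2}$, which equals $(\lambda-1)\binom{n}{2}$ only when $\lambda$ is odd. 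Indeed, $2K_3$ decomposes into three $2$-cycles even though $\sum_{m_i=2}m_i=6>(\lambda-1)\binom{n}{2}=3$, so the fifth condition as stated here (with the parity qualifier evidently dropped in transcription) is not even necessary for even $\lambda$. A necessity ``proof'' that does not surface this cannot be considered complete.

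On sufficiency, steps (1)--(3) of your outline are plausible bookkeeping, but step (4) --- merging and splitting cycles across the layers of $\lambda K_n$ by unspecified ``local exchange (switching) operations'' until the realized multiset of lengths equals $(m_1,\ldots,m_t)$, including when $\max(m_i)+t-2=\frac{\lambda}{2}\binom{n}{2}$ or the digon bound is tight --- is the entire content of \cite{BHMS2}. You supply no exchange lemma, no invariant preserved by the switches, no argument that the process terminates at the target vector, and no treatment of the boundary cases you yourself identify as having ``essentially no slack.'' Identifying where the depth lies is not the same as supplying it. For the purposes of the present paper the correct disposition of this statement is the citation; if you intend to reprove it, the constructions and exchange lemmas must actually be written down.
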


Let $M=m_1,m_2,\ldots,m_t$ be a sequence of positive integers. A decomposition (packing) of a graph $G$ into $t$ cycles of lengths $m_1,m_2,\ldots,m_t$ is denoted as an $(M)$-cycle decomposition ($(M)$-cycle packing) of $G$. 

This next result is the multigraph analogue of a conjecture by Alspach for complete bipartite multigraphs, though it is not a complete solution.

\begin{theorem}\label{bipartiteMaxPacking}
{\normalfont\cite{JohnJamesJoe}} Let $v$, $u$, and $\l$ be positive integers such that $v,u\geq 5$, $v\leq u$, and $\l v\equiv \l u\equiv 0\pmod{2}$. Let $M=m_1,m_2,\ldots,m_t$ be a sequence of non-decreasing positive even integers such that $2\leq m_1\leq m_2\leq \cdots\leq m_t$. An $(M)$-cycle decomposition of $\l K_{v,u}$ exists if all of the following hold:
\begin{itemize}
\item[$(a)$] $m_t\leq 3m_{t-1}$, 
\item[$(b)$]  $m_{t-1}+m_t\leq 2v+2$ if $v<u$ 
\item[$(c)$] $m_{t-1}+m_t\leq 2v$ if $v=u$, and 
\item[$(d)$] $m_1+ m_2+\cdots+ m_t= \l vu$.
\end{itemize}
\end{theorem}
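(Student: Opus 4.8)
The plan is to prove the statement by strong induction on the number of cycles $t$, starting from a fixed ``base'' decomposition and repeatedly applying a local length-adjusting operation. Because $\l K_{v,u}$ is bipartite, every cycle has even length, so it is convenient to write $m_i=2k_i$ and to note that a $2k$-cycle uses exactly $k$ vertices from each side; in particular $m_i\le 2v$ holds automatically. The hypotheses $\l v\equiv \l u\equiv 0\pmod 2$ guarantee that $\l K_{v,u}$ is an even graph, and condition $(d)$ is exactly the edge-count requirement, so these are the basic feasibility constraints (mirroring the structure of the conditions in Theorem~\ref{multiLKn}); conditions $(a)$, $(b)$, $(c)$ will be precisely what make the inductive construction go through.

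First I would establish a convenient base case: a decomposition of $\l K_{v,u}$ into cycles whose lengths are as uniform as the parameters allow. For a single cycle length this is a Sotteau-type $C_{2k}$-decomposition result, which holds under exactly the divisibility and size conditions available here, and for $\l>1$ one stacks and locally repairs copies. The reason for starting from a near-uniform decomposition is that it is the configuration richest in \emph{crossings}: many cycles pass through each vertex, so the switching moves described below are always available.

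The engine of the argument is a \emph{switching lemma}. Given two cycles $C_1$ and $C_2$ of a current decomposition that meet in a vertex $p$ on the $v$-side and a vertex $q$ on the $u$-side, arranged so that the two $p$--$q$ paths of $C_1$ and the two $p$--$q$ paths of $C_2$ interleave, one may exchange a $p$--$q$ subpath of $C_1$ with a $p$--$q$ subpath of $C_2$. This replaces cycles of lengths $(a,b)$ either by two cycles of lengths $(a',b')$ with $a'+b'=a+b$, or, when the paths cross, by a single cycle of length $a+b$; the reverse move splits one cycle into two. Iterating these moves transports length from one cycle to another one unit of half-length at a time, so from the near-uniform base I can reach any target multiset $m_1\le\cdots\le m_t$ summing to $\l vu$. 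Here conditions $(a)$--$(c)$ enter: $(b)$ and $(c)$ guarantee that the two longest required cycles can be accommodated simultaneously within the $2v$ vertices available on a cycle, so the final merge producing $C_t$ alongside $C_{t-1}$ is geometrically possible, while $(a)$, namely $m_t\le 3m_{t-1}$, controls the split step, ensuring that when the longest cycle is assembled the residual lengths never exceed what the remaining cycles can hold.

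The main obstacle is exactly the extremal regime where $m_t$ is close to $2v$ and one of $(a)$, $(b)$, $(c)$ is near-tight: there the supply of cycles sharing two common vertices in the required interleaving pattern becomes scarce, so I would have to argue carefully that a usable switching configuration still exists, if necessary by first re-routing auxiliary cycles to manufacture the needed crossing. I expect this, together with a short list of genuinely small instances near $v=u=5$ that must be settled by direct construction, to absorb most of the work; by comparison, verifying that each switch preserves evenness of all degrees and the total edge count is routine.
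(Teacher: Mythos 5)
First, note that the paper does not prove Theorem~\ref{bipartiteMaxPacking} at all: it is imported verbatim from \cite{JohnJamesJoe}, so there is no in-paper argument to compare yours against. Judged on its own terms, your plan is in the right neighbourhood --- published proofs of results of this type (including the cited one and the simple-graph antecedents in \cite{HH,horsley2012decomposing}) are indeed built on cycle/path switching operations of the kind you describe, which is exactly what the quoted Theorems~\ref{lemma3.4}--\ref{lemma3.2} encapsulate. But as written your argument has a genuine gap at its centre: the assertion that ``iterating these moves transports length from one cycle to another one unit of half-length at a time, so from the near-uniform base I can reach any target multiset'' is essentially the theorem restated. You give no mechanism guaranteeing that, at each intermediate stage, two cycles of the current decomposition meet in a vertex on each side in the required interleaving pattern; you yourself flag that this supply ``becomes scarce'' in the extremal regime and defer the issue, but that regime (where $m_{t-1}+m_t$ is near $2v+2$ and $m_t$ near $3m_{t-1}$) is precisely where the content of conditions $(a)$--$(c)$ lives and where the work of the theorem is concentrated.

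Two further concrete problems. Your base case is not secured: a Sotteau-type uniform $C_{2k}$-decomposition of $K_{v,u}$ needs $k\le\min(v,u)$ together with divisibility conditions such as $2k\mid vu$, and there need not exist any single admissible $k$ for given $v$, $u$, $\l$; ``stacking and locally repairing copies'' for $\l>1$ is not an argument. Moreover, the strategy that actually works in the literature is not to morph one complete decomposition into another but to build the decomposition greedily, peeling off the prescribed cycles while maintaining a packing whose \emph{leave} has controlled size and structure (a single cycle, or a chain with exactly one vertex of degree $4$); conditions $(b)$ and $(c)$ then appear as the sharp bound $\ell\le 2v+2$ (resp.\ $2v$) on the leave size, and condition $(a)$ governs the final split of the leave into the two longest cycles. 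To salvage your approach you would need a quantitative switching lemma asserting that a usable switch exists whenever the current and target length multisets differ, and that is itself a substantial theorem rather than the routine verification you describe.
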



The next theorem will help us join a complete bipartite graph with a complete graph so that the leaves can be decomposed into two paths with end vertices in the same part, though the result below is stronger than we need. A multigraph analogue of the theorem below will be shown in Section~{\normalfont\ref{main}}. 

\begin{theorem}\label{simpleGraphPaths}
{\normalfont\cite{HH}} Let $m\geq 4$ be an even integer, let $G$ be a complete bipartite graph each of whose parts has even size at least $m+2$, let $R$ be a part of $G$, and let $\ell$ be an integer in $\{4,6,8,\ldots,2m-4\}$ such that $|E(G)|\equiv \ell\pmod{m}$. If $p$ and $q$ are positive even integers such that $p,q\geq \ell-m$ and $p+q=\ell$, then there is an $m$-cycle packing of $G$ whose leave has a decomposition into a $p$-path and a $q$-path such that both end vertices of the paths are in $R$.
\end{theorem}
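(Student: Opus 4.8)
The plan is to build an $m$-cycle packing of $G=K_{a,b}$ (parts $A,B$ of even sizes $a,b\ge m+2$, with $R=A$) whose leave $L$ is an even subgraph having exactly $\ell$ edges that splits into the two required paths. The first observation organizes everything: if $L$ is the edge-disjoint union of a $p$-path $P$ and a $q$-path $Q$, then since every vertex of $G$ has even degree ($b$ on $A$, $a$ on $B$), the graph $G-L$ is even if and only if $L$ is even; but $\deg_P$ is odd exactly at the two ends of $P$ and $\deg_Q$ is odd exactly at the two ends of $Q$, so $L$ is even precisely when $P$ and $Q$ share both endpoints. Hence $P,Q$ are two $x$--$y$ paths with a common pair $x,y$, these are forced to lie in $A$ (so $p,q$ are even, as required, and ``both end vertices in $R$'' just means $\{x,y\}\subseteq A$), and the content of the theorem is that $G-L$ admits an $m$-cycle decomposition. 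From $p+q=\ell$ and $p,q\ge\ell-m$ I record $2\le p,q\le m$, and from $ab\equiv\ell\pmod m$ that $(ab-\ell)/m$ is a positive integer.

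The simplest realization of $L$ is a single $\ell$-cycle $C$. Any two vertices of $C\cap A$ at arc-distance $p$ cut $C$ into an $x$--$y$ $p$-path and an $x$--$y$ $q$-path with endpoints in $A$; since the arc-distances between $A$-vertices of an even cycle are exactly $2,4,\dots,\ell-2$, every admissible $p$ is realized. So it suffices to decompose $G$ into one $\ell$-cycle and $(ab-\ell)/m$ copies of $C_m$, and I would get this from Theorem~\ref{bipartiteMaxPacking} applied to the length list $M=(m,\dots,m,\ell)$ with $v=\min(a,b)$, $u=\max(a,b)$. When $\ell\le m$ the two largest entries are $m,m$, so conditions (a)--(c) collapse to $2m\le 2v$, which holds because $v\ge m+2$, while (d) is the edge count; the same argument covers $\ell>m$ whenever $m+\ell\le 2v$ (or $m+\ell\le 2v+2$ when $v<u$).

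The remaining and hardest situation is $\ell>m$ with small parts, where $m+\ell$ may exceed the bound of Theorem~\ref{bipartiteMaxPacking} and a single $\ell$-cycle is no longer an admissible long cycle. (A short computation shows this only occurs once $m\ge 10$ and $\ell\ge m+6$, so in this regime $\ell/2\ge 4$.) Here I would take $L$ to be a figure-eight: two even cycles $C_1,C_2$ of lengths $\ell_1,\ell_2$ with $\ell_1+\ell_2=\ell$ sharing a single vertex $w\in B$. Splitting $\ell$ into two even parts as evenly as possible keeps $\ell_1,\ell_2\le m$ (indeed $\le m-1$, using $\ell\le 2m-4$), so the list $M=(\ell_1,\ell_2,m,\dots,m)$ again has $m,m$ as its two largest entries and Theorem~\ref{bipartiteMaxPacking} applies for all part sizes $\ge m+2$. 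A figure-eight at $w\in B$ decomposes into two $x$--$y$ paths, with $x\in A\cap C_1$ at odd arc-distance $d_1$ from $w$ and $y\in A\cap C_2$ at odd arc-distance $d_2$ from $w$, of lengths $d_1+d_2$ and $(\ell_1-d_1)+(\ell_2-d_2)$; letting $d_1,d_2$ range over odd values realizes every admissible even pair $(p,q)$, including the extreme $p=2$ (take $d_1=d_2=1$).

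The main obstacle is synchronization: Theorem~\ref{bipartiteMaxPacking} returns $C_1$ and $C_2$ somewhere in the decomposition with no control over how (or whether) they intersect, while the figure-eight recutting needs them to meet in exactly one vertex. I would handle this by a local switching argument. If the two short cycles come out vertex-disjoint, I choose an $m$-cycle $C_3$ of the decomposition meeting both (one exists because $C_1\cup C_2$ spans few vertices while $G$ is dense and every vertex has large degree) and perform length-preserving edge exchanges among $C_1,C_2,C_3$ that force $C_1$ and $C_2$ to share a single vertex; if they already overshare, the same kind of swap normalizes the intersection down to one vertex. Proving that such a switch is always available — and checking it uniformly over the full admissible range of $(p,q)$ and over the possible intersection patterns — is the technical heart of the argument; once it is in place, the only remaining work is routine bookkeeping with the parity, divisibility, and length conditions of Theorem~\ref{bipartiteMaxPacking}.
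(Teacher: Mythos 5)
Your reduction of the problem is sound: the leave must be even, so the two paths must share both endpoints, and the parity of $p$ and $q$ places those endpoints in a single part; the single-$\ell$-cycle realization via Theorem~\ref{bipartiteMaxPacking} with length list $m,\dots,m,\ell$ does dispose of every case in which $m+\ell\leq 2v$ (and your identification of the exceptional regime $m\geq 10$, $\ell\geq m+6$, small parts is correct). But the proof has a genuine gap exactly where you flag it. Theorem~\ref{bipartiteMaxPacking} hands you cycles $C_1$ and $C_2$ of lengths $\ell_1,\ell_2$ with no control whatsoever over their mutual position: they may be vertex-disjoint (in which case the leave is disconnected and cannot decompose into two paths sharing both endpoints at all) or may intersect in many vertices. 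The ``local switching argument'' that is supposed to normalize $C_1\cup C_2$ into a figure-eight is asserted, not proved, and it is not a routine step: one must exhibit length-preserving exchanges among three or more cycles that strictly control the degree sequence and the number of non-trivial components of the leave, uniformly over all intersection patterns. This is precisely the content of Theorems~\ref{lemma3.4}, \ref{lemma3.5}, and \ref{lemma3.2} quoted in the paper, each of which carries nontrivial hypotheses (a bound $\ell\leq 2v+2$ on the leave size, a bound $m_1,m_2\geq k+1$ in terms of the number of components $k$, cut-vertex conditions) that your sketch never verifies. Compare the paper's proof of Lemma~\ref{multiGraphPaths}, which faces the same normalization problem in the multigraph setting and resolves it not by switching two prescribed short cycles together, but by taking a packing whose leave is a single $(\ell-m)$-cycle, deleting one $m$-cycle of the packing that meets it (so the enlarged leave is automatically connected with controlled degrees), and only then invoking the component and degree bookkeeping of Theorems~\ref{lemma3.4}--\ref{lemma3.2}. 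Without either importing that machinery and checking its hypotheses, or proving your switching lemma from scratch, the hard case of the theorem remains open in your write-up; everything else is correct.
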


\section{$m$-cycle packings of $(\l+\mu)K_{v+u}-\l K_v$: $m$ even}\label{main}

To build an $m$-cycle decomposition of $(\l+\mu)K_{v+u}-\l K_v$, we will first need to show how to join two $m$-cycle decompositions to build larger $m$-cycle decompositions. Lemmas~{\normalfont\ref{multiGraphPaths}} and {\normalfont\ref{lemma6.1}} contain these results. Before any of this, we need several ancillary results. 

\begin{theorem}\label{lemma3.4}
{\normalfont\cite{JohnJamesJoe}} Suppose that there exists an $(M)$-cycle packing of $\l K_{v,u}$ with a leave $L$. If $a$ and $b$ are vertices in the same part of $\l K_{v,u}$ such that $\deg_L(a)>\deg_L(b)$, then there exists an $(M)$-cycle packing of $\l K_{v,u}$ with a leave $L'$ such that $\deg_{L'}(a)=\deg_L(a)-2$, $\deg_{L'}(b)=\deg_L(b)+2$, and $\deg_{L'}(x)=\deg_L(x)$ for all $x\in V(L)\setminus\{a,b\}$. Furthermore, this $L'$ also satisfies
\begin{itemize}
\item[$(i)$] if $\deg_L(b)=0$ and $a$ is not a cut vertex of $L$, then $L'$ has the same number of non-trivial components as $L$; and
\item[$(ii)$] if $\deg_L(b)=0$, then either $L'$ has the same number of non-trivial components as $L$ or $L'$ has one more non-trivial component than $L$.
\end{itemize}
\end{theorem}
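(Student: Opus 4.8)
The plan is to prove Theorem~\ref{lemma3.4} by an edge-swapping argument along an alternating path or trail connecting the two vertices $a$ and $b$ in the leave and the packing cycles. The key observation is that if $\deg_L(a)>\deg_L(b)$ for two vertices in the same part, then there must be some structural asymmetry we can exploit by walking from $a$ through the edges of $L$ and the cycles of the packing, alternately, and then ``rotating'' a single edge so that $a$ loses two leave-edges and $b$ gains two.

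First I would set up the alternating structure. Since $a$ and $b$ lie in the same part of $\l K_{v,u}$, they have the same neighbourhood in the host graph, so for every vertex $x$ in the opposite part, the multiplicity of the edge $ax$ plus the multiplicity used in the packing equals $\l$, and likewise for $bx$. The idea is to find a vertex $x$ in the opposite part where $a$ has a leave-edge but $b$ does not (more carefully, where the distribution of leave versus packed edges differs between $a$ and $b$), and then trace out an alternating walk that lets us swap roles. A cleaner route is to consider the symmetric difference: take the subgraph formed by the leave-edges at $a$ together with suitably chosen packed edges, and use a parity/counting argument on $\deg_L(a)-\deg_L(b)>0$ to guarantee the existence of an augmenting configuration. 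Concretely, I expect the core to be a single transposition: locate a $2$-path $x\,a\,y$ or an edge $ax$ in $L$ and a cycle $C$ in the packing through $x$ and $b$, then re-route $C$ to pass through $a$ instead of $b$ on one edge, pushing the leave-edge from $a$ onto $b$. Repeating or composing such local moves gives the desired $L'$ with $\deg_{L'}(a)=\deg_L(a)-2$, $\deg_{L'}(b)=\deg_L(b)+2$, and all other degrees unchanged.

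For the additional properties $(i)$ and $(ii)$, I would track how the number of non-trivial components of the leave changes under the swap. Moving two units of degree from $a$ to $b$ either merges, splits, or preserves components of $L$ depending on whether $a$ and $b$ lie in the same component and whether $a$ is a cut vertex. When $\deg_L(b)=0$, vertex $b$ starts as an isolated (trivial) vertex, so after gaining a $2$-path it becomes part of a non-trivial component; the careful case analysis is whether this new structure attaches to an existing component of $L$ or forms a brand-new one. If $a$ is not a cut vertex, removing its two leave-edges cannot disconnect its component, so I would argue the component count is preserved, yielding $(i)$; in general, the worst case adds at most one new non-trivial component, yielding $(ii)$.

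The main obstacle I anticipate is ensuring the swap can always be realized as a valid $(M)$-cycle packing rather than merely a degree-rearrangement of the leave: when we re-route a packing cycle through $a$ instead of $b$, we must verify that the resulting cycle still has an available edge (respecting the multiplicity $\l$ in the host multigraph) and that no two cycles in the packing end up sharing an edge. Handling the multigraph multiplicities---making sure that the edge $ax$ we want to add to a re-routed cycle is not already saturated across the whole packing and the leave---is the delicate bookkeeping step, and it is precisely where the hypothesis $\deg_L(a)>\deg_L(b)$ is used to guarantee that a usable edge incident to $a$ exists to drive the exchange.
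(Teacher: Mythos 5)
This statement is quoted from \cite{JohnJamesJoe} and the paper contains no proof of it at all, so there is no in-paper argument to compare your proposal against; what follows is an assessment of your sketch on its own terms. Your general direction --- exploiting that $a$ and $b$ lie in the same part and hence have identical neighbourhoods, and transferring leave-degree from $a$ to $b$ by an edge-swap between the leave and the packing cycles --- is indeed the standard technique (the $(a,b)$-swap of Bryant--Horsley type used in the source). But as written the argument has real gaps rather than omitted routine details.

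The central move you describe --- find a cycle $C$ through $b$ with cycle-neighbours $w$ and $x$ at $b$, replace the visit to $b$ by a visit to $a$, and push the edges $wb$, $bx$ into the leave while consuming $wa$, $ax$ from it --- only works if \emph{both} $wa$ and $ax$ are available in the leave with exactly those endpoints $w,x$, and if $a$ is not already a vertex of $C$ (otherwise the re-routed $C$ repeats $a$ and is no longer a cycle). Neither condition follows from $\deg_L(a)>\deg_L(b)$; the pigeonhole argument you invoke only yields a single vertex $c$ with more $ac$-leave-edges than $bc$-leave-edges, not a matched pair. The correct proof constructs an auxiliary graph on the set of edges incident with $a$ or $b$, pairing edges that share a non-$\{a,b\}$ endpoint and edges that are consecutive at $a$ or $b$ in a common cycle, and performs the swap along an entire path component of that auxiliary graph; your ``repeating or composing such local moves'' gestures at this but gives no construction, no proof that intermediate steps remain valid packings, and no termination argument. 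Likewise, conclusions $(i)$ and $(ii)$ are asserted from the intuition ``not a cut vertex $\Rightarrow$ component count preserved,'' but you never pin down which edges actually leave and enter $L$, so the component bookkeeping cannot be checked. You correctly identify the multiplicity bookkeeping as the delicate point, but identifying an obstacle is not the same as overcoming it; as it stands this is a plan for a proof, not a proof.
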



A \emph{chain} is a collection of cycles $A_1,A_2,\ldots,A_r$ such that
\begin{itemize}
\item $A_i$ is a cycle of length $2\leq a_i$, and 
\item for $1\leq i<j\leq r$, $|V(A_i)\cap V(A_j)|=1$ if $j=i+1$ and $|V(A_i)\cap V(A_j)|=0$ otherwise.
\end{itemize}
The cycles $A_1$ and $A_r$ are called \emph{end cycles}, the cycles $A_2,A_3,\ldots,A_{r-1}$ are called \emph{internal cycles}, and the vertex in $A_i\cap A_{i+1}$ is called the \emph{link vertex}. A chain containing $r$ cycles is called an \emph{$r$-chain}. A $2$-chain with cycles $C_1$ and $C_2$ will be denoted $C_1\cdot C_2$ or as a $(c_1,c_2)$-chain where $c_1$ and $c_2$ are the lengths of $C_1$ and $C_2$ respectively. 

In the following theorem, we think of the leave as being a graph with $V=V(G)$ and $E=L$.

\begin{theorem}\label{lemma3.5}
{\normalfont\cite{JohnJamesJoe}} Suppose that there exists an $(M)$-cycle packing $\mathcal{P}_0$ of $\l K_{v,u}$, where $v\leq u$, with a leave $L_0$ of size $\ell$, where $\ell\leq 2v+2$ if $v< u$ and $\ell \leq 2v$ if $v=u$, with $k_0$ non-trivial components such that $L_0$ has at least one vertex of degree at least $4$. Then, there exists an $(M)$-cycle packing of $\l K_{v,u}$ with a leave $L'$ such that exactly one vertex of $L'$ has degree $4$, every other vertex of $L'$ has degree $2$ or degree $0$, and $L'$ has at most $\min(\{k_0+d(\mathcal{P}_0)-1, \lfloor \frac{\ell}{2}\rfloor-1\})$ non-trivial components where
$$d(\mathcal{P}_0)=\frac{1}{2}\sum_{x\in D}(\deg_L(x)-2),$$
where $D$ is the set of vertices of $L$ having degree at least $4$.
\end{theorem}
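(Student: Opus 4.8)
The plan is to start from $\mathcal{P}_0$ and repeatedly apply the degree-switching operation of Theorem~\ref{lemma3.4} to flatten the degree sequence of the leave until it has the required shape, using the component-control clause~$(ii)$ of that theorem to bound the growth in the number of non-trivial components, and then to read off the second bound from the structure of the final leave.

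First I would record two elementary facts. Since each vertex of $\l K_{v,u}$ has even degree and every cycle meets a vertex an even number of times, every vertex of $L_0$ has even degree; moreover $\ell=|E(L_0)|$ is even, and because $L_0$ has a vertex of degree at least $4$ we have $\ell\geq 4$. Each part of $\l K_{v,u}$ carries leave-degree-sum exactly $\ell$, and Theorem~\ref{lemma3.4} moves degree only between two vertices of the same part, so both part-sums are invariant under the operation. The target degree sequence (one vertex of degree $4$, every other vertex of degree $2$ or $0$) is feasible precisely because of the size hypotheses: a part of size $s$ can host the degree-$4$ vertex iff $\ell\leq 2s+2$ and can be made all-$\{0,2\}$ iff $\ell\leq 2s$. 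I would designate as the home of the degree-$4$ vertex a part that already contains a vertex of degree at least $4$ (one exists by hypothesis, and the max degree of a part cannot be raised by the operation, so it must be chosen this way). A short check from $\ell\leq 2v+2$ when $v<u$ and $\ell\leq 2v$ when $v=u$ shows that whichever such part is chosen, the other part satisfies $\ell\leq 2(\text{its size})$ and so may legitimately be driven to all-$\{0,2\}$.

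For the reduction, fix a vertex $w$ of degree at least $4$ in the designated part. While some vertex $x\neq w$ has degree at least $4$, or $w$ itself has degree at least $6$, I would apply Theorem~\ref{lemma3.4} with $a=x$ (respectively $a=w$) and with $b$ a vertex of degree $0$ in the same part. Such a $b$ always exists while we are not yet at the target: if a part had no degree-$0$ vertex then all its degrees would be at least $2$, and a genuine excess (a vertex of degree $\geq 6$, or two vertices of degree $\geq 4$) would force its degree-sum to exceed $\ell$, contradicting the bounds above. Because each such move takes $a$ from degree $\geq 4$ to degree $\geq 2$ and $b$ from $0$ to $2$, it decreases $d(\mathcal{P}_0)=\frac12\sum_{x\in D}(\deg_L(x)-2)$ by exactly $1$; since this quantity starts at $d(\mathcal{P}_0)$ and the target has value $1$, exactly $d(\mathcal{P}_0)-1$ moves are performed. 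As every move uses $\deg_L(b)=0$, clause~$(ii)$ applies and increases the number of non-trivial components by at most $1$, so the resulting leave $L'$ has at most $k_0+d(\mathcal{P}_0)-1$ non-trivial components.

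Finally I would obtain the other half of the minimum by a structural count on $L'$. Its non-trivial components are vertex-disjoint; one of them contains the degree-$4$ vertex and hence has at least $4$ edges (the smallest such bipartite multigraph being two $2$-cycles sharing that vertex), while every other non-trivial component is a cycle and so has at least $2$ edges. Writing $t'$ for the number of non-trivial components, $\ell\geq 4+2(t'-1)$, i.e.\ $t'\leq \lfloor \ell/2\rfloor-1$. Combining the two estimates gives the claimed bound $\min\{k_0+d(\mathcal{P}_0)-1,\ \lfloor \ell/2\rfloor-1\}$. The step I expect to be most delicate is guaranteeing that a degree-$0$ receiving vertex is available in the correct part at every stage while the protected vertex $w$ is never pushed below degree $4$; this is exactly where the thresholds $\ell\leq 2v+2$ and $\ell\leq 2v$ are needed, and it must be verified in the boundary case $\ell=2v+2$ (possible only when $v<u$), where the designated $v$-part ends with no degree-$0$ vertex at all.
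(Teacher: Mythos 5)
This statement is one the paper imports verbatim from \cite{JohnJamesJoe} and does not prove, so there is no in-paper argument to compare yours against; I can only assess your proposal on its own terms. On those terms it is essentially correct and is the natural argument given the stated toolkit: repeatedly invoke Theorem~\ref{lemma3.4} with $b$ of degree $0$, observe that each such move decreases $d$ by exactly one and (by clause $(ii)$) creates at most one new non-trivial component, and then extract the bound $\lfloor\ell/2\rfloor-1$ from the fact that the component containing the degree-$4$ vertex has at least $4$ edges while every other non-trivial component is a cycle of length at least $2$. Your termination and counting arguments are sound, as is the observation that $w$ can never drop below degree $4$.

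The one place you should tighten is the sentence claiming that \emph{whichever} part containing a vertex of degree at least $4$ is designated, the other part satisfies $\ell\le 2(\text{its size})$. That is false in the boundary case $v<u$, $\ell=2v+2$: if the $u$-part is designated, the $v$-part would need $\ell\le 2v$, which fails. The fix is to designate the $v$-part whenever $\ell=2v+2$; this is always possible because the $v$-part carries leave-degree sum $\ell=2v+2>2v$ and hence is forced to contain a vertex of degree at least $4$. With that choice, your degree-sum counts do show that a degree-$0$ receiving vertex exists in the relevant part at every step, including in the $v$-part during the boundary case (any configuration triggering a move there has degree sum at least $2v+4-2z$ where $z$ is the number of degree-$0$ vertices, forcing $z\ge 1$). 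You flag this as the delicate point but leave it unverified; it should be written out. Finally, your opening assertion that all leave degrees are even tacitly uses $\l v\equiv\l u\equiv 0\pmod 2$, which is implicit in the context of \cite{JohnJamesJoe} but not in the statement as quoted, and is worth making explicit since the definition of $d(\mathcal{P}_0)$ and the target degree sequence both depend on it.
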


\begin{theorem}\label{lemma3.2}
{\normalfont\cite{JohnJamesJoe}} Suppose that there exists an $(M)$-cycle packing $\mathcal{P}$ of $\l K_{v,u}$ with a leave $L$ of size $\ell$ with $k$ non-trivial components such that exactly one vertex of $L$ has degree $4$ and every other vertex of $L$ has degree $2$ or degree $0$. If 
$m_1$ and $m_2$ are integers such that $m_1,m_2\geq k+1$ and $m_1+m_2=\ell$, then there exists an $(M)$-cycle packing of $\l K_{v,u}$ with a leave whose only non-trivial component is a chain which has a decomposition into an $m_1$-path and an $m_2$-path.
\end{theorem}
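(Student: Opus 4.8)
I would begin by pinning down the structure of the leave. As the leave of an $(M)$-cycle packing of $\l K_{v,u}$, the graph $L$ has all even degrees, and by hypothesis exactly one vertex $w$ has degree $4$ while every other vertex has degree $2$ or $0$. The component containing $w$ is then a connected even graph with a single degree-$4$ vertex and all remaining degrees equal to $2$, which forces it to be two cycles meeting only at $w$, i.e.\ a $2$-chain $A\cdot B$ (a second common vertex would also have degree $4$, contradicting the hypothesis). Each of the other $k-1$ non-trivial components is connected and $2$-regular, hence a single cycle. Thus $L$ is made up of exactly $k+1$ cycles: the two cycles of the $2$-chain together with the $k-1$ remaining disjoint cycles. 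The plan is to reassemble these into a single $(k+1)$-chain and then cut that chain into the two required paths; the count $k+1$ is exactly what makes $m_1,m_2\geq k+1$ the correct threshold.

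It is cleanest to dispatch the splitting step first, since it explains the numerology. Suppose the leave has been reduced to a single $r$-chain $A_1,\ldots,A_r$ with link vertices $w_1,\ldots,w_{r-1}$. Because every vertex of a chain has even degree, each degree-$4$ link can only be split as $2+2$ between two paths, so it is interior to both; and the endpoints of the two paths must pair off, forcing $P_1$ and $P_2$ to be two edge-disjoint paths with the same ends $s,t$. Choosing $s$ on $A_1$ and $t$ on $A_r$, each cycle $A_i$ is partitioned by its two distinguished vertices (its links, or a link together with $s$ or $t$) into two arcs, one assigned to $P_1$ and one to $P_2$; if $P_1$ receives an arc of length $d_i$ then $|E(P_1)|=\sum_i d_i$ and $|E(P_2)|=\ell-\sum_i d_i$. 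Taking each $d_i$ minimal gives total $r$ and each maximal gives $\ell-r$, so both path lengths can be made to range over all of $[r,\ell-r]$, provided the links are positioned with enough freedom (arranged below). With $r=k+1$ this is exactly the interval cut out by $m_1,m_2\geq k+1$ and $m_1+m_2=\ell$, so every admissible pair $(m_1,m_2)$ is attainable.

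The substance of the proof is the reassembly step: turning the $2$-chain together with the $k-1$ disjoint cycles into a single $(k+1)$-chain while keeping a valid $(M)$-cycle packing. I would proceed by induction on the number of not-yet-absorbed cycles, at each stage splicing one free cycle $C$ onto an end cycle of the current chain. The splice is a switching argument in the spirit of Theorem~\ref{lemma3.4}: using an alternating trail between a vertex of $C$ and a vertex of the current chain, one exchanges leave-edges and packing-edges so that a chosen vertex becomes a new degree-$4$ link joining $C$ to the chain, the packing remaining an $(M)$-cycle packing because the lengths recorded in $M$ are preserved by the exchange. Each splice lowers the number of non-trivial components by one and lengthens the chain by one cycle; I would choose the new link so that the two longest cycles end up as the end cycles $A_1,A_r$ and so that the two links on each internal cycle are placed to deliver the arc lengths demanded by the splitting step.

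The main obstacle is precisely this reassembly. Unlike Theorem~\ref{lemma3.4}, whose guarantees only ever preserve or increase the number of non-trivial components, here I must force the component count strictly \emph{down}, which means showing the splicing trail can always be chosen to bridge two distinct components rather than to reconnect a component to itself; this is where one exploits that the ambient graph is a complete bipartite multigraph, leaving room to reroute the packing. The secondary difficulty, flagged above, is to perform the splices so that the resulting links sit in positions making the full range $[k+1,\ell-k-1]$ realizable (for instance by keeping consecutive links close on each internal cycle and reserving the longest cycles for the ends); handling this jointly with the component-merging, rather than as an afterthought, is the delicate part. Once both are secured, the splitting step of the second paragraph completes the proof.
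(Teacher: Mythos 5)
This statement is quoted from \cite{JohnJamesJoe}; the paper under review gives no proof of it, so there is nothing internal to compare your argument against. Judged on its own terms, your proposal gets the easy parts right: the component containing the degree-$4$ vertex is indeed a $2$-chain, the other $k-1$ non-trivial components are cycles, and the analysis of how a single $r$-chain can split into two paths (link vertices forced to be $2+2$, the two path-ends coinciding at two degree-$2$ vertices, one on each end cycle) is correct and does explain why the threshold is $k+1$.

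The genuine gap is the reassembly step, which you yourself flag as ``the main obstacle'' and then do not carry out. You assert that a switching ``in the spirit of Theorem~\ref{lemma3.4}'' can be chosen to bridge two distinct components, preserve the multiset $M$ of cycle lengths in the packing, and place the new link vertex wherever needed --- but none of this follows from the tools stated. Theorem~\ref{lemma3.4} only transfers degree between two vertices in the \emph{same part}, gives no control over which edges end up in the new leave, and its clauses $(i)$--$(ii)$ bound the \emph{increase} in the number of non-trivial components rather than forcing a decrease; there is no merge operation available off the shelf. Moreover, the link-placement issue you defer is genuinely load-bearing, not a formality: for a fixed $3$-chain with two end $2$-cycles and a middle $10$-cycle whose links are adjacent, the achievable path lengths are only $3$ and $11$, not the full interval $[3,11]$, so without a proved mechanism for positioning the links (or for re-deriving the chain separately for each target pair $(m_1,m_2)$) the splitting step in your second paragraph does not apply. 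Until the merging-plus-placement lemma is actually established, the proof is an outline rather than a proof.
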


This next result will allow us to join an $m$-cycle decomposition of $\l K_{v,u}$ or $(\l+\mu)K_{v,u}$ to an $m$-cycle decomposition of $\l K_u$ or $(\l+\mu)K_u$ respectively in our main theorem. 

\begin{lemma}\label{multiGraphPaths}
Let $\l$ be a positive integer. Let $m\geq 4$ be an even integer if $\l=1$ and let $m\geq 2$ be an even integer if $\l>1$.  Let $G$ be a complete bipartite multigraph with multiplicity $\l$ where each part has size at least $m+2$, and each vertex has even degree.  Let $\ell\in\{4,6,8,\ldots,2m-4\}$ if $\l=1$ or $\ell\in\{2,4,6,\ldots,2m-2\}$ if $\l\geq 2$ such that in either case $|E(G)|\equiv \ell\pmod{m}$. If $p$ and $q$ are positive even integers such that $p,q\geq \ell-m$ and $p+q=\ell$, then there is an $m$-cycle packing of $G$ whose leave has a decomposition into a $p$-path and a $q$-path.
\end{lemma}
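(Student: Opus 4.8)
The plan is to reduce to the tools already assembled: Theorem~\ref{simpleGraphPaths} for $\l=1$, and the bipartite decomposition result Theorem~\ref{bipartiteMaxPacking} together with the leave-reshaping lemmas (Theorems~\ref{lemma3.4}, \ref{lemma3.5}, \ref{lemma3.2}) for $\l\ge 2$. First I would dispose of the degenerate ranges. If $\l=1$ then every vertex of $G=K_{v,u}$ has degree equal to the size of the opposite part, so the even-degree hypothesis forces both parts to have even size; since each part has size at least $m+2$ with $m\ge 4$, Theorem~\ref{simpleGraphPaths} applies verbatim (its range $\ell\in\{4,\dots,2m-4\}$ is exactly the one imposed here when $\l=1$, and its stronger conclusion about endpoints is more than we need). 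If $m=2$ then $\ell=2$ and there are no positive even $p,q$ with $p+q=2$, so the statement is vacuous. Hence I may assume $\l\ge 2$ and $m\ge 4$; writing the parts as $V,U$ with $|V|=v\le u=|U|$, I have $v,u\ge m+2\ge 6\ge 5$ and $\l v\equiv\l u\equiv 0\pmod 2$ (again from even degrees), which is exactly what Theorem~\ref{bipartiteMaxPacking} requires.

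The target is an $m$-cycle packing of $G=\l K_{v,u}$ whose leave has a single non-trivial component that is either an $\ell$-cycle or a $2$-chain of total size $\ell$, since such a leave decomposes into a $p$-path and a $q$-path. I would split on whether a single $\ell$-cycle leave is attainable from Theorem~\ref{bipartiteMaxPacking}, that is, on whether $m+\ell\le 2v$ (if $v=u$) or $m+\ell\le 2v+2$ (if $v<u$). In \textbf{Case 1} (single-cycle leave attainable, which includes every $\ell\le m$ because then $m+\ell\le 2m\le 2v-4$), I apply Theorem~\ref{bipartiteMaxPacking} with the length sequence consisting of the integer $k=(\l vu-\ell)/m$ copies of $m$ together with one cycle of length $\ell$. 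The two longest lengths are $\ell$ and $m$ (or $m$ and $m$ when $\ell\le m$), so conditions $(a)$--$(d)$ hold by the case hypothesis and $v\ge m+2$. Retaining the $k$ cycles of length $m$ as the packing leaves the $\ell$-cycle as the leave, and cutting this cycle into an arc of $p$ edges and the complementary arc of $q$ edges expresses its edge set as a $p$-path and a $q$-path, as required.

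In \textbf{Case 2} (single-cycle leave not attainable) I have $m+\ell>2v$, which forces $\ell>m$ and hence $p,q\ge\ell-m\ge 2$. I choose even $\ell_1,\ell_2$ with $\ell_1+\ell_2=\ell$ and $2\le\ell_1,\ell_2\le m$ (for instance $\ell_1=m,\ \ell_2=\ell-m$) and aim for a packing whose leave is a $2$-chain $C_1\cdot C_2$ with $|C_1|=\ell_1$, $|C_2|=\ell_2$ meeting in exactly one vertex: its unique link vertex has degree $4$, every other vertex has degree $2$ or $0$, and it has $k_0=1$ non-trivial component. Theorem~\ref{lemma3.2}, applied with $(m_1,m_2)=(p,q)$, then yields a packing whose only non-trivial leave component is a chain decomposing into a $p$-path and a $q$-path, the hypothesis $p,q\ge k_0+1=2$ being met since $p,q\ge 2$. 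To realize the $2$-chain I start from Theorem~\ref{bipartiteMaxPacking} with $k'$ copies of $m$ together with an $\ell_1$-cycle $C_1$ and an $\ell_2$-cycle $C_2$ (the two longest lengths are now $m,m$, so $(a)$--$(d)$ hold because $v\ge m+2$), obtaining an $m$-cycle packing with leave $C_1\cup C_2$; if $C_1,C_2$ already share exactly one vertex I am done, and otherwise I would modify the decomposition by a local exchange among $C_1$, $C_2$, and one $m$-cycle of the packing, using $\l\ge 2$ and the abundance of vertices ($v,u\ge m+2$) to reroute the two short cycles so that they meet in a single common vertex while the rest of the leave stays a disjoint union of $m$-cycles.

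The main obstacle is precisely this last step. Two vertex-disjoint cycles cannot be decomposed into two paths, so the leave must be connected; moreover the switching operation of Theorem~\ref{lemma3.4} only transfers degree between vertices and can never raise the maximum leave-degree, so it cannot manufacture the degree-$4$ link vertex out of an all-degree-$2$ leave. Consequently the required overlap of $C_1$ and $C_2$ cannot be produced after the fact and must be engineered inside the decomposition itself. Carrying out the exchange that forces a single shared vertex, and then verifying that the resulting leave satisfies the hypotheses of Theorem~\ref{lemma3.2} --- in particular using Theorem~\ref{lemma3.5}, if the initial leave is messier than a clean $2$-chain, to reduce to exactly one degree-$4$ vertex and at most $\lfloor\ell/2\rfloor-1$ non-trivial components, a count compatible with $p,q\ge\ell-m$ --- is the technical heart of the argument.
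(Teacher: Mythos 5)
Your framework (Theorem~\ref{simpleGraphPaths} for $\l=1$; Theorem~\ref{bipartiteMaxPacking} plus the leave-reshaping results for $\l\ge 2$) is the same as the paper's, and your Case 1 --- taking the leave to be a single $\ell$-cycle whenever conditions $(a)$--$(d)$ of Theorem~\ref{bipartiteMaxPacking} permit it, then cutting that cycle into a $p$-path and a $q$-path --- is correct and matches the paper's treatment of $\ell\le m+2$. But your Case 2 has a genuine gap at exactly the point you flag as ``the technical heart'': Theorem~\ref{bipartiteMaxPacking} hands you an $\ell_1$-cycle and an $\ell_2$-cycle in the leave with no control over whether they intersect, and the ``local exchange among $C_1$, $C_2$, and one $m$-cycle'' that is supposed to reroute them into a $2$-chain is never constructed. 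As you yourself observe, Theorem~\ref{lemma3.4} only transfers degree from a vertex of larger leave-degree to one of smaller leave-degree, so it cannot manufacture the degree-$4$ link vertex out of a leave in which every vertex has degree $2$ or $0$; without that vertex neither Theorem~\ref{lemma3.5} nor Theorem~\ref{lemma3.2} can be invoked, and a disconnected union of two cycles does not decompose into two paths. This case is nonvacuous (e.g.\ $v=u=m+2$ and $\ell=2m-2$), so the proposal does not close.

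The paper's resolution is a different and cleaner device worth noting: for $\ell\ge m+4$ it takes, via Theorem~\ref{bipartiteMaxPacking}, a packing whose leave is a single $(\ell-m)$-cycle, and then \emph{removes one $m$-cycle from the packing} that shares a vertex with that $(\ell-m)$-cycle. Such an $m$-cycle must exist because each vertex of the $(\ell-m)$-cycle has degree at least $\l(m+2)>2$ in $G$ and hence lies on cycles of the packing. The new leave is then automatically a single non-trivial component consisting of an $(\ell-m)$-cycle and an $m$-cycle with at least one vertex of degree $4$; Theorems~\ref{lemma3.4} and~\ref{lemma3.5} reduce this to exactly one degree-$4$ vertex and at most $\ell-m-1$ non-trivial components, and Theorem~\ref{lemma3.2} then applies because $p,q\ge\ell-m=(\ell-m-1)+1$. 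Replacing your unproven rerouting step with this ``un-pack one $m$-cycle'' move repairs the argument.
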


\begin{proof}
If $\l=1$, the result follows by Theorem~{\normalfont\ref{simpleGraphPaths}}. So now assume that $\l\geq 2$.

If $\ell\leq m+2$ then $\ell\leq 3m$ (Condition $(a)$ of Theorem~{\normalfont\ref{bipartiteMaxPacking}}), $2(m+2)\leq 2\min(\{u,v\})\leq 2(v+u)$ (Conditions $(b)$ and $(c)$ of Theorem~{\normalfont\ref{bipartiteMaxPacking}}), and $m+m+\cdots+m+\ell=\l uv$ (Condition $(d)$ of Theorem~{\normalfont\ref{bipartiteMaxPacking}}), so there exists an $m$-cycle packing of $G$ by Theorem~{\normalfont\ref{bipartiteMaxPacking}} with a leave that is a single cycle of length $\ell$. Then it is clear that there exists a decomposition of the leave to form the required $p$-path and $q$-path. Thus we suppose that $\ell\geq m+4$. 

Again, by Theorem~{\normalfont\ref{bipartiteMaxPacking}}, there exists an $m$-cycle packing $\mathcal{P}$ of $G$ with a leave whose only non-trivial component is an $(\ell-m)$-cycle. It is clear that there is an $m$-cycle that shares at least one vertex in common with the $(\ell-m)$-cycle. By removing this cycle from the packing $\mathcal{P}$ we form a new packing $\mathcal{P}_1$ and leave $L_1$ in which $L_1$ contains exactly one non-trivial component composed of an $(\ell-m)$-cycle and an $m$-cycle which share at least one vertex and at most $\ell-m$ vertices in common. That is, the leave contains at least one vertex of degree $4$, and all other vertices are either degree $2$ or degree $0$. The leave $L_1$ must contain at least one vertex of degree $0$ since the $(\ell-m)$-cycle and the $m$-cycle in the leave share one vertex in common and so $\frac{(\ell-m)+m}{2}-1=\frac{\ell-2}{2}\leq \frac{2m-4}{2} \leq m-2\leq \min(\{u,v\})$, that is, the non-trivial component of $L_1$ does not contain all vertices in either part of the partition of $G$. Also, notice that if $L_1$ contains $\ell-m$ vertices of degree $4$, then $L_1$ contains no cut vertex since the only non-trivial component of $L_1$ contains exactly $m$ vertices and an $m$-cycle. 
Thus by applying Theorems~{\normalfont\ref{lemma3.4}} and {\normalfont\ref{lemma3.5}} we will have at most $\ell-m-1$ non-trivial components. From the previous observations, it follows that by applying Theorem~{\normalfont\ref{lemma3.5}}, we can form an $m$-cycle packing of $G$ whose leave $L'$ contains exactly one component with a vertex of degree $4$ and at most $\ell-m-1$ non-trivial components. 
Thus by Theorem~{\normalfont\ref{lemma3.2}}, we can form an $m$-cycle packing of $G$ whose leave can be decomposed into a $p$-path and a $q$-path.
\end{proof}


\begin{theorem}\label{setsAndRelabeling}
{\normalfont\cite{horsley2012decomposing}} Let $A$ be a set, let $S$ and $T$ be subsets of $A$, and let $s'$ and $t'$ be non-negative integers. Then, there exist subsets $S'$ and $T'$ of $A$ such that $|S'|=s'$, $|T'|=t'$, $S\cap S'=\varnothing$, $T\cap T'=\varnothing$, and $S'\cap T'=\varnothing$ if and only if 
\begin{enumerate}
\item[$(i)$] $|S\cap T|+s'+t'\leq |A|$;
\item[$(ii)$] $|S|+s'\leq |A|$; and 
\item[$(iii)$] $|T|+t'\leq |A|$.
\end{enumerate}
\end{theorem}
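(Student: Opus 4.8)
The plan is to prove the two directions separately, both by direct counting. For the necessity direction, suppose the desired subsets $S'$ and $T'$ exist. Since $S'$ is disjoint from $S$, I would observe that $|S|+|S'|=|S\cup S'|\le|A|$, which is exactly $(ii)$; the symmetric statement for $T'$ and $T$ gives $(iii)$. For $(i)$, I would note that the three sets $S\cap T$, $S'$, and $T'$ are pairwise disjoint, since $S'\cap(S\cap T)\subseteq S'\cap S=\varnothing$, $T'\cap(S\cap T)\subseteq T'\cap T=\varnothing$, and $S'\cap T'=\varnothing$ by hypothesis. Hence $|S\cap T|+s'+t'=|(S\cap T)\cup S'\cup T'|\le|A|$, giving $(i)$.

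For sufficiency, I would assume $(i)$--$(iii)$ and build $S'$ and $T'$ explicitly. Partition $A$ into the four regions $A_{00}=A\setminus(S\cup T)$, $A_{10}=S\setminus T$, $A_{01}=T\setminus S$, and $A_{11}=S\cap T$, with sizes $a,b,c,d$, so that $|A|=a+b+c+d$, $|S|=b+d$, and $|T|=c+d$. Because $S'$ must avoid $S$ it must lie inside $A_{00}\cup A_{01}$, because $T'$ must avoid $T$ it must lie inside $A_{00}\cup A_{10}$, and the two chosen sets can collide only within $A_{00}$.

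The key device is a greedy allocation: fill $S'$ first into $A_{01}$ and push the surplus $\max(0,s'-c)$ into $A_{00}$, and symmetrically fill $T'$ first into $A_{10}$ with surplus $\max(0,t'-b)$ into $A_{00}$. Rewriting conditions $(ii)$ and $(iii)$ as $s'\le a+c$ and $t'\le a+b$ shows that each of $S'$ and $T'$ has enough room in its allowed region and that each surplus is at most $a$. The only remaining requirement is that the two surpluses fit disjointly inside $A_{00}$, namely $\max(0,s'-c)+\max(0,t'-b)\le a$. A short case analysis on the signs of $s'-c$ and $t'-b$ disposes of three cases directly from $(ii)$ and $(iii)$, while the case in which both surpluses are positive reduces exactly to $s'+t'+d\le|A|$, which is precisely condition $(i)$.

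I expect the main (and only mild) obstacle to be the sufficiency direction: carefully bookkeeping the greedy allocation and confirming that condition $(i)$ is exactly what is needed in the single genuinely constrained case, with $(ii)$ and $(iii)$ covering the others. No heavier machinery such as Hall's theorem is required, since the explicit construction settles the claim.
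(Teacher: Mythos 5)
Your argument is correct and complete. Note that the paper itself offers no proof of this statement: it is quoted from \cite{horsley2012decomposing} (and the accompanying remark only observes that the result extends to $s'=0$ or $t'=0$, which your construction handles automatically via empty choices). So there is nothing in the paper to compare against line by line; judged on its own, your proof is sound. The necessity direction is exactly the right disjoint-union count. For sufficiency, the four-region partition with sizes $a,b,c,d$ correctly translates $(ii)$ into $s'\le a+c$, $(iii)$ into $t'\le a+b$, and $(i)$ into $s'+t'\le a+b+c$, and the greedy allocation reduces everything to the inequality $\max(0,s'-c)+\max(0,t'-b)\le a$, whose four sign cases are each implied by one of the three conditions (the doubly-positive case being precisely $(i)$). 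This is an elementary, self-contained verification of the cited result and could stand as a proof of Theorem~\ref{setsAndRelabeling} if one did not wish to rely on the reference.
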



\begin{remark}
Though the result in Theorem~{\normalfont\ref{setsAndRelabeling}} was originally stated with $s'$ and $t'$ as positive integers, the result holds when either $s'$ or $t'$ is $0$. 
\end{remark}

Following the terminology in {\normalfont\cite{horsley2012decomposing}}, we define the triple $(A,S,T)$ to be $(s',t')$-good if $A$, $S$, and $T$ are sets and both $s'$ and $t'$ are integers such that $S,T\subseteq A$ and Conditions $(i)$, $(ii)$, and $(iii)$ of Theorem~{\normalfont\ref{setsAndRelabeling}} hold. 

The following lemma is a multigraph analogue of Lemma~6.1 in {\normalfont\cite{horsley2012decomposing}}. Though the proof provided in Lemma~6.1 of {\normalfont\cite{horsley2012decomposing}} was sufficient to prove the next lemma with generalizations, their lemma did not include that both end vertices of the paths in the leave are in $B$, which is necessary to our main theorem. As such, a complete proof to Lemma~{\normalfont\ref{lemma6.1}} is provided below.


\begin{lemma}\label{lemma6.1}
Let $H_2$ be an even complete bipartite multigraph with parts $A$ and $B$ and let $H_1$ be an even multigraph with vertex set $A$. Let $H_3$ be an even graph on vertex set $A\cup\{\infty\}$ where $\infty\not\in A\cup B$. Suppose there exists an $(M_1)$-cycle packing $\mathcal{P}_1$ of $H_1$ whose leave has a decomposition into a $p$-path $P_1$ and a $q$-path $Q_1$, there exists an $(M_2)$-cycle packing $\mathcal{P}_2$ of $H_2$ with a leave whose only non-trivial component is an $\ell$-cycle, and there exists an $(M_3)$-cycle packing $\mathcal{P}_3$ of $H_3$ whose leave can be decomposed into a $p$-path $P_3$ and a $q$-path $Q_3$ with both end vertices in $A$.
Then for $(H^*,P^*,Q^*,\mathcal{P}^*,M^*)\in\{(H_1,P_1,Q_1,\mathcal{P}_1,M_1),(H_3,P_3,Q_3,\mathcal{P}_3,M_3)\}$ the following hold.
\begin{enumerate}
\item[$(a)$] For any even integers $p'$ and $q'$ such that $p',q'\geq 2$ and $p'+q'=\ell$, if $(A,V(P^*),V(Q^*))$ is $\left(\frac{p'-2}{2},\frac{q'-2}{2}\right)$-good, then there exists an $(\widetilde{M})$-cycle decomposition of $H^*\cup H_2$ where $\widetilde{M}$ is the sequence $M^*,M_2,p+p',q+q'$.
\item[$(b)$] For any even integers $p'$ and $q'$ such that $p',q'\geq 2$ and $p'+q'=\ell$, if $(A,V(P^*),V(Q^*))$ is $\left(\frac{p'-2}{2},\frac{q'-2}{2}\right)$-good, then there exists an $(\widetilde{M})$-cycle packing of $H^*\cup H_2$ where $\widetilde{M}$ is the sequence $M^*,M_2$, whose leave has a decomposition into a $(p+p')$-path and a $(q+q')$-path with both end vertices in $B$.
\end{enumerate}
\end{lemma}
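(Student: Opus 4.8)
The plan is to reduce both parts to decomposing one fixed graph in two different ways. The construction hinges on the observation that $P^*$ and $Q^*$ share \emph{both} of their end vertices, say $a$ and $b$, with $a,b\in A$; this holds for the (connected) leaves to which the lemma is applied, e.g.\ the chains produced by Theorem~\ref{lemma3.2}, because splitting a connected even graph into exactly two open paths forces, by a degree-parity count, the two paths to have the same pair of ends. Consequently $C^*:=P^*\cup Q^*$ joins $a$ and $b$ by two internally disjoint arcs of lengths $p$ and $q$. I would then relabel the $\ell$-cycle $C$ of $\mathcal{P}_2$ so that it too passes through $a$ and $b$, cutting it into arcs $R_1,R_2$ of lengths $p'$ and $q'$; the union $C^*\cup C$ is then a generalized theta graph, four internally disjoint arcs joining $a$ and $b$ of lengths $p,q,p',q'$, and both parts reduce to decomposing this theta graph.

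The relabelling is the only use of the hypothesis. Because $H_2$ is complete bipartite, permuting $A$ and $B$ carries $\mathcal{P}_2$ to an equivalent packing, so I may prescribe which vertices the $\ell$-cycle uses: force two of its $A$-vertices to be $a$ and $b$ at arc-distance $p'$ (possible since $p'$ is even), take its $B$-vertices to be arbitrary distinct vertices of $B$ (there is room, as $\ell/2\le m-1<|B|$), and require the interior $A$-vertices of $R_1$ to avoid $V(P^*)$, those of $R_2$ to avoid $V(Q^*)$, and the two sets to be mutually disjoint. These sets have sizes $\tfrac{p'-2}{2}$ and $\tfrac{q'-2}{2}$, so the existence of the relabelling is precisely the statement that $(A,V(P^*),V(Q^*))$ is $\left(\tfrac{p'-2}{2},\tfrac{q'-2}{2}\right)$-good, which Theorem~\ref{setsAndRelabeling} supplies. (For $H_3$ one intersects $V(P^*),V(Q^*)$ with $A$, so the vertex $\infty$ is harmless.) Since $R_1,R_2$ use only edges joining $A$ and $B$, while $P^*,Q^*$ use none, edge-disjointness is automatic.

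For part (a) I would close the theta graph into two cycles: $P^*\cup R_1$ is a $(p+p')$-cycle and $Q^*\cup R_2$ is a $(q+q')$-cycle, both simple by the freshness conditions; together with the cycles of $\mathcal{P}^*$ and those of $\mathcal{P}_2$ other than $C$ this gives the $(\widetilde M)$-cycle decomposition. For part (b) I would instead cut $R_1$ at an interior $B$-vertex $u$ and $R_2$ at an interior $B$-vertex $v$, splitting each arc into an $a$-side and a $b$-side, and cross the pieces: let $\widetilde P$ run $u\to a$ along $R_1$, then $a\to b$ along $P^*$, then $b\to v$ along $R_2$, and let $\widetilde Q$ run $u\to b$ along $R_1$, then $b\to a$ along $Q^*$, then $a\to v$ along $R_2$. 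These are simple paths with common ends $u,v\in B$ that together exhaust $C^*\cup C$, and taking the cut positions $i,j$ (measured from $a$) with $i-j=p'-q'$ makes their lengths exactly $p+p'$ and $q+q'$; suitable odd $i,j$ always exist. Crucially, with $i-j=p'-q'$ the interior $A$-vertices forced to avoid $V(P^*)$ again number $\tfrac{p'-2}{2}$ and those avoiding $V(Q^*)$ again number $\tfrac{q'-2}{2}$, so the \emph{same} goodness condition suffices; adjoining the remaining cycles yields the required packing. Note the output paths again share both ends, now in $B$, so the hypothesis propagates to later applications.

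The main obstacle is part (b). Unlike part (a), where closing arcs into cycles is immediate, here one must verify that the crossed routing produces honest simple paths of the prescribed lengths \emph{with both ends in $B$} --- exactly the feature that the simple-graph predecessor in \cite{horsley2012decomposing} does not record --- and, most delicately, that this can be arranged using the very same $(s',t')$-goodness as part (a) rather than a stronger hypothesis; the bookkeeping $i-j=p'-q'$ is what reconciles the two. The remaining points are routine: that relabelling $H_2$ does not disturb $\mathcal{P}^*$ (they meet only in $A$ and share no edges), and that nothing depends on the multiplicity $\l$ beyond the parity and size conditions already built into the hypotheses.
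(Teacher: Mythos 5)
Your proposal is correct and follows essentially the same route as the paper: both use the parity of the even leave to see that $P^*$ and $Q^*$ share their two end vertices in $A$, invoke Theorem~\ref{setsAndRelabeling} to relabel the $\ell$-cycle of $\mathcal{P}_2$ so that it passes through those two vertices with its remaining $A$-vertices split into a $\frac{p'-2}{2}$-set avoiding $V(P^*)$ and a $\frac{q'-2}{2}$-set avoiding $V(Q^*)$, and then close the resulting theta graph into two cycles for (a) or cut it at two $B$-vertices and cross the pieces for (b). The only difference is cosmetic: in (b) the paper cuts at the two cycle-neighbours of one shared endpoint (your $i=j=1$ up to swapping which arc pairs with which path), whereas you cut at general interior $B$-vertices with $i-j=p'-q'$; your bookkeeping showing the same goodness condition suffices checks out.
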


\begin{proof}
Suppose the assumptions stated in this lemma hold. We will handle parts $(a)$ and $(b)$ separately. 

\textbf{Part (a):} Since $H^*$ is an even graph, the leave of $H^*$ is also an even graph. Let $P^*$ and $Q^*$ have end vertices $a,a'\in A$. Notice that in the case that $P^*=P_3$ and $Q^*=Q_3$ we assumed the end vertices were in $A$. Even though $P_3$ or $Q_3$ may include $\infty$ as one of its vertices, Theorem~{\normalfont\ref{setsAndRelabeling}} can still be used since it requires only sets of vertices rather than the edges of the cycles. By using Theorem~{\normalfont\ref{setsAndRelabeling}} (since $(A,V(P^*),V(Q^*))$ is $\left(\frac{p'-2}{2},\frac{q'-2}{2}\right)$-good) we can relabel the vertices in $\mathcal{P}_2$ so that the leave of $\mathcal{P}_2$ can be decomposed into a $p'$-path and a $q'$-path with end vertices $a$ and $a'$ such that $V(P^*)\cap V(P')=V(Q^*)\cap V(Q')=\{a,a'\}$. Thus we obtain the required decomposition.


\textbf{Part (b):} Since $H^*$ is an even graphs, the leave of $H^*$ is also an even graph. Let $P^*$ and $Q^*$ have end vertices $a,a'\in A$. Let $b$ and $b'$ be distinct vertices in $B$. Again, notice that in the case that $P^*=P_3$ and $Q^*=Q_3$ we assumed the end vertices were in $A$. By using Theorem~{\normalfont\ref{setsAndRelabeling}} (since $(A,V(P^*),V(Q^*))$ is $\left(\frac{p'-2}{2},\frac{q'-2}{2}\right)$-good) we can relabel the vertices in $\mathcal{P}_2$ so that the leave of $\mathcal{P}_2$ can be decomposed into two 1-paths $[a,b]$ and $[a,b']$, a $(p'-1)$-path $P'$ from $a'$ to $b'$, and a $(q'-1)$-path $Q'$ from $a'$ to $b$ such that $V(P')\cap V(P^*)=V(Q')\cap V(Q^*)=\{a'\}$. Then $\mathcal{P}^*\cup \mathcal{P}_2$ is an $(M^*,M_2)$-packing of $H^*\cup H_2$ with a leave that can be decomposed into a $(p+p')$-path $P^*\cup P'\cup [a,b]$ and a $(q+q')$-path $Q^*\cup Q'\cup [a,b']$, attaining the required packing.
\end{proof}

\begin{theorem}\label{mainThm}
Let $\l$ and $\mu$ be integers such that $\l> 0$ and $\mu\geq 0$. Let $v$ and $u$ be positive integers. If $m\geq 4$ is an even integer such that $v\geq m+2$, $u\geq m+2$, and the conditions in Theorem~{\normalfont\ref{necessaryConditions1}} are satisfied, then there is an $m$-cycle decomposition of $(\l+\mu)K_{v+u}-\l K_v$. 
\end{theorem}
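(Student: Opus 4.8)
The plan is to split $G=(\l+\mu)K_{v+u}-\l K_v$ into its three natural blocks and to reassemble them into $m$-cycles using the gluing machinery of Lemma~\ref{lemma6.1}. Writing $V$ and $U$ for the two vertex classes, $G$ is the edge-disjoint union of the multigraph $\mu K_v$ on $V$, the multigraph $(\l+\mu)K_u$ on $U$, and the complete bipartite multigraph $(\l+\mu)K_{v,u}$ joining them. Conditions~$(a)$--$(c)$ of Theorem~\ref{necessaryConditions1} guarantee that every vertex of $G$ has even degree and that $|E(G)|\equiv 0\pmod m$, while $(d)$ and $(e)$ are vacuous here since $u,v\geq m+2>m$. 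The essential difficulty is that the blocks need not be even on their own: the within-block degree parity of $(\l+\mu)K_u$ (or of $\mu K_v$) may be odd, compensated only by the bipartite degrees, so the blocks cannot in general be decomposed separately and must be glued. The case $\mu=0$ is the simplest, since then there is no $V$-block and a single gluing suffices.

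First I would pack the bipartite block. Because each part has size at least $m+2$, Theorem~\ref{bipartiteMaxPacking}, together with the reasoning in Lemma~\ref{multiGraphPaths}, lets me $m$-cycle-pack $(\l+\mu)K_{v,u}$ so that its leave is a single $\ell$-cycle, where $\ell$ is the residue of $(\l+\mu)vu$ modulo $m$ brought into the admissible range; this is exactly the $H_2$ hypothesis of Lemma~\ref{lemma6.1}. Next I would produce path-leave packings of the two complete blocks. When a block is even, Theorem~\ref{multiLKn} decomposes it into $m$-cycles plus a few short cycles, which Theorems~\ref{lemma3.5} and~\ref{lemma3.2} convert into a leave consisting of a $p$-path and a $q$-path with $p+q=\ell$; this supplies the $H_1$ hypothesis. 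When a block has odd within-block degrees, I would fix parity by choosing a distinguished vertex $\infty$ in the opposite class and absorbing the bipartite edges incident with $\infty$ into the block, yielding an even graph $H_3$ on $A\cup\{\infty\}$ with a $p$-path/$q$-path leave. This is precisely the role of $H_3$ in Lemma~\ref{lemma6.1}, and it is available because the relevant parity (an odd value of $\l+\mu$ times an odd $u-1$, say) forces the opposite class to have even size. Removing $\infty$ leaves the residual bipartite graph complete bipartite with a part of size at least $m+1$, so the bipartite packing still applies; the single boundary value $u=m+2$ or $v=m+2$ may need a direct check.

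The assembly then proceeds by two applications of Lemma~\ref{lemma6.1}. Taking $A=U$, $B=V$, $H_2=(\l+\mu)K_{v,u}$, and $H^*$ the (possibly $\infty$-corrected) packing of $(\l+\mu)K_u$, part~$(b)$ produces an $m$-cycle packing of the $U$-block together with the bipartite block whose leave is a $(p+p')$-path and a $(q+q')$-path with both ends in $V$. A second, closing step then absorbs $\mu K_v$: packing $\mu K_v$ (again $\infty$-corrected if needed) with a two-path leave whose endpoints are relabelled, via Theorem~\ref{setsAndRelabeling}, to coincide with the ends of the carried-over paths, one glues each carried path to a $\mu K_v$-path into an exact $m$-cycle, exactly as in the mechanism of part~$(a)$ where the cut $\ell$-cycle is matched to $H^*$'s paths. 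The $(s',t')$-goodness hypotheses are routine to verify here because $|A|$ is large relative to the path lengths.

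The main obstacle I expect is the simultaneous compatibility of the three leaves: one must choose the residue $\ell$ and the even path lengths $p,q$ so that the cut pieces of the bipartite $\ell$-cycle glue to the two-path leaves of \emph{both} complete blocks into exact $m$-cycles, while each complete block may independently be even or odd. This forces a case analysis on the parities of $\l,\mu,u,v$ and on the residues of $(\l+\mu)\binom{u}{2}$, $\mu\binom{v}{2}$, and $(\l+\mu)vu$ modulo $m$, and it is where conditions~$(a)$--$(c)$ are used in full. A secondary technical point, and the step most likely to require $u,v\geq m+2$ and some small-case handling, is guaranteeing that the complete blocks admit packings with path-leaves of the \emph{prescribed} lengths and endpoint locations: Theorem~\ref{multiLKn} controls only cycle lengths, so the conversion to two correctly placed paths through Theorems~\ref{lemma3.5} and~\ref{lemma3.2} combined with the $\infty$-correction is where the argument is most delicate.
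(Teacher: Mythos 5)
Your outline reproduces the paper's strategy in all essentials: split $(\l+\mu)K_{v+u}-\l K_v$ into the blocks $\mu K_v$, $(\l+\mu)K_{v,u}$ and $(\l+\mu)K_u$, pack the bipartite block with a single-cycle leave via Lemma~\ref{multiGraphPaths}, pack the complete blocks via Theorem~\ref{multiLKn}, repair an odd block by moving a vertex $\infty$ across the bipartition, apply Lemma~\ref{lemma6.1}(b) to merge the bipartite and $U$-side leaves into two paths ending in $V$, and close them against the $V$-side leave after relabelling with Theorem~\ref{setsAndRelabeling}. Two concrete divergences are worth flagging. First, in the parity subcase $\l$ even, $\l+\mu$ odd the paper does not use an $\infty$-correction at all: it re-splits the graph as $\mu K_{v+u}\cup\l K_{v,u}\cup\l K_u$, taking the $\mu$-layer over \emph{all} of $V\cup U$ so that every block is even; your alternative of placing $\infty$ in $U$ when $\mu K_v$ is the odd block is plausible by the symmetry of Lemma~\ref{lemma6.1}, but it is not what the paper does and would need its own verification of the bipartite hypotheses with part sizes $v$ and $u-1$. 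Second, Theorems~\ref{lemma3.5} and~\ref{lemma3.2} are statements about packings of $\l K_{v,u}$ only and cannot be invoked on the complete blocks as you propose; the paper instead asks Theorem~\ref{multiLKn} directly for a leave that is a single $\ell_i$-cycle and splits that cycle into two paths by hand, reserving Theorems~\ref{lemma3.4}--\ref{lemma3.2} for the interior of Lemma~\ref{multiGraphPaths}.

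The substantive gap is that the part you explicitly defer is the bulk of the paper's proof: for each combination of residues $\ell_1,\ell_2,\ell_3$ one must choose path lengths $p_i,q_i$ so that the glued pieces close into genuine $m$-cycles (the sums $p_1+p_2+p_3$ and $q_1+q_2+q_3$ must each equal $m$ or be absorbed correctly when $\ell_1+\ell_2+\ell_3=2m$) while every triple stays $(s',t')$-good; this is exactly the paper's Table~\ref{easier} together with inequalities~(\ref{ScapTArgument}) and~(\ref{ScapTArgument2}), plus a separate argument for the regime $\ell_1+\ell_2+\ell_3=2m$ with $\ell_2+\ell_3\geq m+2$, where a different split of $L_3$ into an $s$-path and an $(\ell_3-s)$-path is needed. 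The paper also disposes of $m=4$ by citation and works with $m\geq 6$, which several of the leave-size ranges quietly require. So the architecture is right and matches the paper, but the proof as written stops where the paper's real work begins.
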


\begin{proof}
Let $\l,\mu,v,$ and $u$ satisfy the necessary conditions in Theorem~{\normalfont\ref{necessaryConditions1}}. The case when $m=4$ has been settled in {\normalfont\cite{newman}}, so we may assume $m\geq 6$. We will break this proof into two cases: 1) $\l+\mu$ is even, or both $\l+\mu-1$ and $\l$ are even; and 2) $\l+\mu$ and $\l$ are odd.

{\bf Case 1:} Suppose $\l+\mu$ is even, or both $\l+\mu-1$ and $\l$ are even.

If $\l+\mu$ is even, let $G_1=\mu K_v$ with vertex set $V$, $G_2=(\l+\mu)K_{v,u}$ with vertex set $V\cup U$, $G_3=(\l+\mu)K_u$ with vertex set $U$, and $G'=G_2\cup G_3=(\l+\mu)K_{v+u}-(\l+\mu)K_v$. If $\l+\mu-1$  and $\l$ are even, let $G_1=\mu K_{v+u}$ with vertex set $V\cup U$, $G_2=\l K_{v,u}$ with vertex set $V\cup U$, $G_3=\l K_u$ with vertex set $U$, and $G'=G_2\cup G_3=\l K_{v+u}-\l K_v$. 
Let $\ell_1,\ell_3\in\{0,4,6,8,\ldots,m-2,m+2\}$ such that $\ell_1\equiv |E(G_1)|\pmod{m}$ and $\ell_3\equiv |E(G_3)|\pmod{m}$. 
Let $\ell_2\in\{4,6,8,\ldots,m-4,m-2,m,m+2\}$ such that $\ell_2\equiv |E(G_2)|\pmod{m}$. By Theorem~{\normalfont\ref{multiLKn}} and by our assumptions on $\l$ and $\mu$, there exists an $m$-cycle packing $\mathcal{P}_1$ of $G_1$ and $m$-cycle packing $\mathcal{P}_3$ of $G_3$ with a leave $L_1$ and $L_3$ that contains is a single cycle of length $\ell_1$ and length $\ell_3$ respectively. Note that if $\ell_1=0$ or $\ell_3=0$, then the leave is considered empty.

If $\ell_1=\ell_3=0$, then $\ell_2=m$ since $\ell_1+\ell_2+\ell_3\equiv 0\pmod{m}$. If this were the case, we could form an $m$-cycle decomposition of $G_2$ by Theorem~{\normalfont\ref{bipartiteMaxPacking}}, and thus by joining this decomposition with $\mathcal{P}_1$ and $\mathcal{P}_3$ in the natural way, we form the required packing.

Let $r=\frac{m}{2}$ if $\frac{m}{2}$ is even and let $r=\frac{m}{2}-1$ if $\frac{m}{2}$ is odd. Define $e$, $p_1$, $q_1$, $p_2$, $q_2$, $p_3$, $q_3$, $p_4$, and $q_4$ as in Table~{\normalfont\ref{easier}} depending on $\ell_1$, $\ell_2$, and $\ell_3$.

\begin{table}[htb]
\begin{center}
\begin{tabular}{|c|c|c|c|c|c|c|c|c|c|c|}
\hline
& $e$ & $p_1$ & $q_1$ & $p_2$ & $q_2$ & $p_3$ & $q_3$ & $p_4$ & $q_4$ \\
\hline
\hline
$\ell_1=0$, $\ell_2+\ell_3=m$ & $m+\ell_2$ & $-$ & $-$ & $m-2$ & $\ell_2+2$ & $2$ & $\ell_3-2$ & $-$ & $-$ \\
\hline
$\ell_1=0$, $\ell_2+\ell_3=2m$ & $\ell_2$ & $-$ & $-$ & $r$ & $\ell_2-r$ & $r$ & $\ell_3-r$ & $-$ & $-$ \\
\hline
$\ell_3=0$, $\ell_1+\ell_2=m$ & $m+\ell_2$ & $2$ & $\ell_3-2$ & $m-2$ & $\ell_2+2$ & $-$ & $-$ & $-$ & $-$ \\
\hline
$\ell_3=0$, $\ell_1+\ell_2=2m$ & $\ell_2$ & $r$ & $\ell_3-r$ & $r$ & $\ell_2-r$ & $-$ & $-$ & $-$ & $-$ \\
\hline
$\ell_1\neq 0$, $\ell_3\neq 0$ & \multirow{2}{*}{$m+\ell_2$} & \multirow{2}{*}{$1$} & \multirow{2}{*}{$\ell_1-1$} & \multirow{2}{*}{$m-2$} & \multirow{2}{*}{$\ell_2+2$} & \multirow{2}{*}{$1$} & \multirow{2}{*}{$\ell_3-1$} & \multirow{2}{*}{$m-1$} & \multirow{2}{*}{$\ell_2+\ell_3+1$} \\
$\ell_1+\ell_2+\ell_3=m$ &&&&&&&&& \\
\hline
$\ell_1\neq 0$, $\ell_3\neq 0$  & \multirow{3}{*}{$\ell_2$} & \multirow{3}{*}{$m-3$} & \multirow{3}{*}{$5$} & \multirow{3}{*}{$2$ } & \multirow{3}{*}{$\ell_2-2$} & \multirow{3}{*}{$1$} & \multirow{3}{*}{$\ell_3-1$} & \multirow{3}{*}{$3$} & \multirow{3}{*}{$\ell_2+\ell_3-3$} \\
$\ell_1+\ell_2+\ell_3=2m$, &&&&&&&&& \\
$\ell_2+\ell_3< m+2$ &  &  &  & &  &  &  &  & \\
\hline
\end{tabular}
\end{center}
\caption{Definitions of variables based on $\ell_1$, $\ell_2$, and $\ell_3$}\label{easier}
\end{table}

Suppose that $\ell_1$, $\ell_2$, and $\ell_3$ are defined as in any one row of Table~{\normalfont\ref{easier}} and suppose that if $\ell_1\neq 0$ and $\ell_1+\ell_2+\ell_3=2m$, then $\ell_2+\ell_3<m+2$. If $\ell_1\neq 0$, $\ell_3\neq 0$, $\ell_1+\ell_2+\ell_3=2m$, and $\ell_2+\ell_3<m+2$, then $\ell_1=m+2$ and $\ell_2+\ell_3=m-2$ (since otherwise $\ell_2+\ell_3=m$ and thus $\ell_1=0$, a contradiction).
Recall that $u\geq m+2$ and $v\geq m+2$. Notice that in each case when $e=m+\ell_2$, $\ell_2\leq m-4$ and thus $e\leq 2m-4$; so the requirement in Theorem~{\normalfont\ref{multiGraphPaths}} that the size of the leave is at most $2m-4$ when $v\geq m+2$ is satisfied ($e$ will represent the size of the leave of an $m$-cycle packing of $G_2$).
Then by Theorem~{\normalfont\ref{multiGraphPaths}}, there exists an $m$-cycle packing $\mathcal{P}_2$ of $G_2$ with a leave that is a cycle of size $e$.
It is clear that $L_3$ can be decomposed into a $p_3$-path $P_3$ and an $q_3$-path $Q_3$ (assuming $L_3$ is non-empty). 
Since $|(V(P_3)\cap U)\cap (V(Q_3)\cap U)|\leq \min(\{p_3,q_3\})$, $|V(P_3)\cap U|=p_3+1$, $|V(Q_3)\cap U|=q_3+1$, and by our choices in Table~{\normalfont\ref{easier}}, it follows that the conditions in Theorem~{\normalfont\ref{setsAndRelabeling}} are satisfied, so $(U,V(P_3)\cap U, V(Q_3)\cap U)$ is $(\frac{p_2-2}{2},\frac{q_2-2}{2})$-good. 
If $\ell_1=0$, by  Lemma~{\normalfont\ref{lemma6.1}}(a) (with $p=p_3$, $q=q_3$, $p'=p_2$, and $q'=q_2$), there exists an $m$-cycle decomposition of $G'$ and so along with the packing $\mathcal{P}_1$, we attain the required decomposition of $(\l+\mu)K_{v+u}-\l K_v$. Otherwise, after we join $G_2$ and $G_3$ to form $G'$, we must join the leave in $G'$ and the leave in $G_1$ together. 
Thus by Lemma~{\normalfont\ref{lemma6.1}}(b) (with $p=p_3$, $q=q_3$, $p'=p_2$, and $q'=q_2$), there exists an $m$-cycle packing $\mathcal{P}'$ of $G'$ with a leave that can be decomposed into an $p_4$-path $P'$ and an $q_4$-path $Q'$. 
There is a clear decomposition of $L_1$ into a $p_1$-path $P_1$ with end vertices $x$ and $y$ and a $q_1$-path $Q_1$. Note that since $|(V(P_1)\cap V)\cap (V(Q_1)\cap V)|\leq \min(\{p_1,q_1\})$ and $\ell_2+\ell_3<m-2$ when $\ell_1+\ell_2+\ell_3=2m$ and $\ell_1\neq 0$, it follows that 
\begin{align}\label{ScapTArgument}
\min(\{p_1,q_1\})+\frac{p_4-2}{2}+\frac{q_4-2}{2} &\leq \begin{cases}
1+\frac{m-3}{2}+\frac{\ell_2+\ell_3-1}{2} &\mbox{if $\ell_1+\ell_2+\ell_3=m$, and} \\
5+\frac{3-2}{2}+\frac{\ell_2+\ell_3-3}{2} &\mbox{if $\ell_1+\ell_2+\ell_3=2m$} \\
\end{cases}\\ \notag
&\leq \begin{cases}
1+\frac{m-3}{2}+\frac{m+1}{2} &\mbox{if $\ell_1+\ell_2+\ell_3=m$, and} \\
\frac{11}{2}+\frac{m-1}{2} &\mbox{if $\ell_1+\ell_2+\ell_3=2m$} \\
\end{cases}\\
&\leq m<v-1. \notag
\end{align}
Then by using Theorem~{\normalfont\ref{setsAndRelabeling}} (since $(V, V(P_1)\cap V, V(Q_1)\cap V)$ is $(\left\lfloor\frac{p_4-2}{2}\right\rfloor,\left\lfloor\frac{q_4-2}{2}\right\rfloor)$-good), the vertices of $\mathcal{P}_1$ can be relabeled 
so that $V(Q_1)\cap V(Q')=V(P_1)\cap V(P')=\{x,y\}$. Thus there exists an $m$-cycle decomposition $\mathcal{P}'\cup \mathcal{P}_1\cup \{E(P'\cup P_1),E(Q'\cup Q_1)\}$ of $(\l+\mu)K_{v+u}-\l K_v$. 

Now suppose $\ell_1\neq 0$, $\ell_1+\ell_2+\ell_3=2m$, and $\ell_2+\ell_3\geq m+2$, so $\ell_2+\ell_3\leq 2m-4$. 
By Theorem~{\normalfont\ref{bipartiteMaxPacking}}, there exists an $m$-cycle packing $\mathcal{P}_2$ of $G_2$ with a leave that is a cycle of size $\ell_2$.
Let $s$ and $s'$ be positive integers such that $s+s'=m-1$, $\ell_3-s\geq 1$, $\ell_2-s'\geq 2$, and $s'$ is even. Such integers exist since $\ell_2+\ell_3\geq m+2$ and $\ell_2,\ell_3>0$. It is clear that $L_3$ can be decomposed into an $s$-path $P_3$ and an $(\ell_3-s)$-path $Q_3$.  
Since $|V(P_3)\cap U|=s$, $|V(Q_3)\cap U|=\ell_3-s$ and hence $|(V(P_3)\cap U)\cap (V(Q_3)\cap U)|\leq \frac{\ell_3}{2}$ and $\frac{\ell_3}{2}+\frac{s'-2}{2}+\frac{\ell_2-s'-2}{2}\leq \frac{2m}{2}=m$, it follows by Theorem~{\normalfont\ref{setsAndRelabeling}} that $(U,V(P_3)\cap U, V(Q_3)\cap U)$ is $(\frac{s'-2}{2},\frac{\ell_2-s'-2}{2})$-good. 
Thus by Lemma~{\normalfont\ref{lemma6.1}}(b) (with $p=s$, $q=\ell_3-s$, $p'=s'$, and $q'=\ell_2-s'$), there exists an $m$-cycle packing $\mathcal{P}'$ of $G'$ with a leave that can be decomposed into an $(m-1)$-path $P'$ and an $(\ell_2+\ell_3-m+1)$-path $Q'$ with both end vertices in $V$. 
There is a clear decomposition of $L_1$ into a $1$-path $P_1=[x,y]$ and an $(\ell_1-1)$-path $Q_1$. 
Note that since $|(V(P')\cap V)\cap (V(Q')\cap V)|\leq \min(\{m-1,\ell_2+\ell_3-s-s'\})$, it follows that 
\begin{align}\label{ScapTArgument2}
\min(\{m-1,\ell_2+\ell_3-(s+s')\})+1+\ell_1-1 &\leq \ell_2+\ell_3-(m-1)+\ell_1=m+1\leq v-1
\end{align}
Then by using Theorem~{\normalfont\ref{setsAndRelabeling}} (since $(V,V(P')\cap V,V(Q')\cap V)$ is $(0,\ell_1-2)$-good), the vertices of $\mathcal{P}_1$ can be relabeled so that $V(Q_1)\cap V(Q')=\{x,y\}$. Thus there exists an $m$-cycle decomposition $\mathcal{P}'\cup \mathcal{P}_1\cup \{E(P'\cup P_1),E(Q'\cup Q_1)\}$ of $(\l+\mu)K_{v+u}-\l K_v$.

 \textbf{Case 2:} Suppose that both $\l+\mu$ and $\l$ are odd.

Let $\infty\in V$. Let $G_1=\mu K_v$ with vertex set $V$, $G_2=(\l+\mu)K_{v-1,u}$ with vertex set $(V\setminus\{\infty\})\cup U$, $G_3=(\l+\mu)K_{u+1}$ with vertex set $U\cup\{\infty\}$, and $G'=G_1\cup G_2$. 
Let $\ell_1,\ell_3\in\{0,4,6,8,\ldots,m-2,m+2\}$ such that $\ell_1\equiv |E(G_1)|\pmod{m}$ and $\ell_3\equiv |E(G_3)|\pmod{m}$. 
Let $\ell_2\in\{4,6,8,\ldots,m-4,m-2,m,m+2\}$ such that $\ell_2\equiv |E(G_2)|\pmod{m}$. Since $\l+\mu$ and $\l$ are odd, $v-1$ and $\mu$ are even, so by Theorem~{\normalfont\ref{multiLKn}}, there exists an $m$-cycle packing $\mathcal{P}_1$ of $G_1$ and $m$-cycle packing $\mathcal{P}_3$ of $G_3$ with a leave $L_1$ and $L_3$ that is a single cycle of length $\ell_1$ and length $\ell_3$ respectively. Note that if $\ell_1=0$ or $\ell_3=0$, then the leave is considered empty.

If $\ell_1=\ell_3=0$, then $\ell_2=m$ since $\ell_1+\ell_2+\ell_3\equiv 0\pmod{m}$. As in Case 1, if this were the case, we could form the appropriate $m$-cycle decomposition by using Theorems~{\normalfont\ref{multiLKn}} and {\normalfont\ref{bipartiteMaxPacking}}. 

Let $r=\frac{m}{2}$ if $\frac{m}{2}$ is even and let $r=\frac{m}{2}-1$ if $\frac{m}{2}$ is odd. Define $e$, $p_1$, $q_1$, $p_2$, $q_2$, $p_3$, $q_3$, $p_4$, and $q_4$ as in Table~{\normalfont\ref{easier}} depending on $\ell_1$, $\ell_2$, and $\ell_3$.

Suppose that $\ell_1$, $\ell_2$, and $\ell_3$ are defined as in any one row of Table~{\normalfont\ref{easier}} and suppose that if $\ell_1\neq 0$ and $\ell_1+\ell_2+\ell_3=2m$, then $\ell_2+\ell_3<m+2$. As said in Case 1, if $\ell_1\neq 0$, $\ell_3\neq 0$, $\ell_1+\ell_2+\ell_3=2m$, and $\ell_2+\ell_3<m+2$, then $\ell_1=m+2$ and $\ell_2+\ell_3=m-2$.
Recall that $u\geq m+2$ and $v\geq m+2$. Notice that since $G_2$ has multiplicity greater than $2$, in each case when $e=m+\ell_2$, $\ell_2\leq m-4$ and thus $e\leq 2m-4$; so the requirement in Theorem~{\normalfont\ref{multiGraphPaths}} that the size of the leave is at most $2m-4$ when $v-1\geq  m+1$ is satisfied. (Note that the size of the leave for the packing in Theorem~{\normalfont\ref{multiGraphPaths}} is at most $2m-4$ instead of $2m-2$ since we are using this theorem with $v-1$ vertices in one of the parts instead of $v$ vertices.)
Then by Theorem~{\normalfont\ref{multiGraphPaths}}, there exists an $m$-cycle packing $\mathcal{P}_2$ of $G_2$ with a leave that is a cycle of size $e$.
It is clear that $L_3$ can be decomposed into a $p_3$-path $P_3$ and a $q_3$-path $Q_3$ with both end vertices in $U$. 
Since $|(V(P_3)\cap U)\cap (V(Q_3)\cap U)|\leq \min(\{p_3,q_3\})$, by using Theorem~{\normalfont\ref{setsAndRelabeling}} it follows that $(U,V(P_3)\cap U, V(Q_3)\cap U)$ is $(\frac{p_2-2}{2},\frac{q_2-2}{2})$-good. 
If $\ell_1=0$, by  Lemma~{\normalfont\ref{lemma6.1}}(a) (with $p=p_3$, $q=q_3$, $p'=p_2$, and $q'=q_2$), there exists an $m$-cycle decomposition of $G'$ and so along with the packing $\mathcal{P}_1$, we attain the required decomposition of $(\l+\mu)K_{v+u}-\l K_v$. Otherwise, after we join $G_2$ and $G_3$ to form $G'$, we must join the leave in $G'$ and the leave in $G_1$ together. 
Thus by Lemma~{\normalfont\ref{lemma6.1}}(b) (with $p=p_3$, $q=q_3$, $p'=p_2$, and $q'=q_2$), there exists an $m$-cycle packing $\mathcal{P}'$ of $G'$ with a leave that can be decomposed into a $p_4$-path $P'$ and a $q_4$-path $Q'$. 
Since $p_1,q_1<v$, it is clear that $L_1$ can be decomposed into a $p_1$-path $P_1$ with end vertices $x$ and $y$ in $V\setminus\{\infty\}$ and a $q_1$-path $Q_1$ so that if $\infty\in V(P')$ then $\infty\not\in V(P_1)$ or if $\infty\in V(Q')$ then $\infty\not\in V(Q_1)$.
Note that since $|((V(P_1)\cap (V\setminus\{\infty\}))\cap (V(Q_1)\cap (V\setminus\{\infty\}))|\leq \min(\{p_1,q_1\})$, it follows that Equation~({\normalfont\ref{ScapTArgument}}) holds (since $p_1$, $q_1$, $p_4$, and $q_4$ are the same as in Case 1). 
Then by using Theorem~{\normalfont\ref{setsAndRelabeling}} (since $(V\setminus\{\infty\}, V(P_1)\cap (V\setminus \{\infty\}), V(Q_1)\cap (V\setminus \{\infty\}))$ is $(\left\lfloor\frac{p_4-2}{2}\right\rfloor,\left\lfloor\frac{q_4-2}{2}\right\rfloor)$-good), the vertices of $\mathcal{P}_1$ can be relabeled 
so that $V(Q_1)\cap V(Q')=V(P_1)\cap V(P')=\{x,y\}$. Thus there exists an $m$-cycle decomposition $\mathcal{P}'\cup \mathcal{P}_1\cup \{E(P'\cup P_1),E(Q'\cup Q_1)\}$ of $(\l+\mu)K_{v+u}-\l K_v$. 

Now suppose $\ell_1\neq 0$, $\ell_1+\ell_2+\ell_3=2m$, and $\ell_2+\ell_3\geq m+2$, so $\ell_2+\ell_3\leq 2m-4$. 
By Theorem~{\normalfont\ref{bipartiteMaxPacking}}, there exists an $m$-cycle packing $\mathcal{P}_2$ of $G_2$ with a leave that is a cycle of size $\ell_2$.
Let $s$ and $s'$ be positive integers such that $s+s'=m-1$, $\ell_3-s\geq 1$, $\ell_2-s'\geq 2$, $s$ and $s'$ are positive integers, and $s'$ is even. Such integers exist since $\ell_2+\ell_3\geq m+2$ and $\ell_2,\ell_3>0$. It is clear that $L_3$ can be decomposed into an $s$-path $P_3$ and an $(\ell_3-s)$-path $Q_3$.  
Since $|V(P_3)\cap U|=s$, $|V(Q_3)\cap U|=\ell_3-s$ and hence $|(V(P_3)\cap U)\cap (V(Q_3)\cap U)|\leq \frac{\ell_3}{2}$ and $\frac{\ell_3}{2}+\frac{s'-2}{2}+\frac{\ell_2-s'-2}{2}\leq \frac{2m}{2}=m$, it follows by Theorem~{\normalfont\ref{setsAndRelabeling}} that $(U,V(P_3)\cap U, V(Q_3)\cap U)$ is $(\frac{s'-2}{2},\frac{\ell_2-s'-2}{2})$-good. 
Thus by Lemma~{\normalfont\ref{lemma6.1}}(b) (with $p=s$, $q=\ell_3-s$, $p'=s'$, and $q'=\ell_2-s'$), there exists an $m$-cycle packing $\mathcal{P}'$ of $G'$ with a leave that can be decomposed into an $(m-1)$-path $P'$ and an $(\ell_2+\ell_3-m+1)$-path $Q'$ with both end vertices in $V\setminus\{\infty\}$. 
Since $\ell_1-1<v$, it is clear that $L_1$ can be decomposed into a $1$-path $P_1=[x,y]$ with end vertices $x,y\in V\setminus\{\infty\}$ and an $(\ell_1-1)$-path $Q_1$ so that if $\infty\in V(P')$ then $\infty\not\in V(P_1)$ or if $\infty\in V(Q')$ then $\infty\not\in V(Q_1)$.
Note that since $|((V(P')\cap (V\setminus\{\infty\}))\cap (V(Q')\cap (V\setminus\{\infty\}))|\leq \min(\{m-1,\ell_2+\ell_3-s-s'\})$, it follows that Equation~({\normalfont\ref{ScapTArgument2}}) holds (since $p_1$, $q_1$, $p_4$, and $q_4$ are the same as in Case 1). 
Then by using Theorem~{\normalfont\ref{setsAndRelabeling}} (since $(V\setminus\{\infty\},V(P')\cap(V\setminus\{\infty\}),V(Q'\cap(V\setminus\{\infty\}))$ is $(0,\ell_1-2)$-good), the vertices of $\mathcal{P}_1$ can be relabeled so that $V(Q_1)\cap V(Q')=\{x,y\}$ (do not relabel to ensure that we retain the properties for $P_1$ and $Q_1$ described in the decomposition of $L_1$). Thus there exists an $m$-cycle decomposition $\mathcal{P}'\cup \mathcal{P}_1\cup \{E(P'\cup P_1),E(Q'\cup Q_1)\}$ of $(\l+\mu)K_{v+u}-\l K_v$. 

\end{proof}

\small
\bibliographystyle{amsplain}	
\bibliography{vdec2}		

\end{document}